\newtheorem{lemma}{Lemma}[section]
\newtheorem{theorem}[lemma]{Theorem}
\newtheorem{proposition}[lemma]{Proposition}
\newtheorem{definition}[lemma]{Definition}
\newtheorem{assumption}[lemma]{Assumption}
\newtheorem*{theorem*}{Theorem}
\theoremstyle{definition}
\newtheorem{examples}[lemma]{Examples}
\theoremstyle{remark}
\begin{document}

\title{Consistency of Ranking Estimators} 
\author{Toby~Kenney}

\maketitle

\begin{abstract}
The ranking problem is to order a collection of units by some
unobserved parameter, based on observations from the associated
distribution. This problem arises naturally in a number of contexts,
such as business, where we may want to rank potential projects by
profitability; or science, where we may want to rank predictors
potentially associated with some trait by the strength of the
association. This approach provides a valuable alternative to the
sparsity framework often used with big data. Most approaches to this
problem are empirical Bayesian, where we use the data to estimate the
hyperparameters of the prior distribution, then use that distribution
to estimate the unobserved parameter values. There are a number of
different approaches to this problem, based on different loss
functions for mis-ranking units. Despite the number of papers developing
methods for this problem, there is no work on the consistency of these
methods. In this paper, we develop a general framework for consistency
of empirical Bayesian ranking methods, which includes nearly all
commonly used methods. We then determine conditions under which
consistency holds. Given that little work has been done on selection
of prior distribution, and that the loss functions developed are not
strongly motivated, we consider the case where both of these are
misspecified. We show that provided the loss function is reasonable;
the prior distribution is not too light-tailed; and the error in measuring
each unit converges to zero at a fast enough rate compared with the
number of units (which is assumed to increase to infinity); all
ranking methods are consistent.
\end{abstract}

\subsection*{keywords}
Consistency; Ranking; Empirical Bayes

\section{Introduction}
The ranking problem is to order a collection of units. The field is
divided into two main cases. The first is the latent variable ranking,
where each unit $u_i$ has some tangible latent parameter $\theta_i$
and we want to rank them according to the values of this parameter.
The more general case is loss-based ranking, where for each suggested
ranking, we are given a loss score, which follows a certain
distribution, and our objective is to select the ranking that
minimises this loss score. The latent variable ranking problem is the
focus of the current paper.

We formalise the problem as follows. A latent-variable ranking problem
consists of a collection ${\mathcal U}=\{u_1,u_2,\ldots,u_p\}$ of
units with unobserved parameters $\theta_i$. For each unit, we have a
point estimate $x_i$ for $\theta_i$. We assume that $x_i$ has a known
error distribution with mean $\theta_i$ and known standard deviation
$\sigma_i$. The objective is to assign a ranking to the units, namely
a bijection $\rho:{\mathcal U}\rightarrow \{1,2,\ldots,p\}$ such that
$\rho(u_i)<\rho(u_j)$ whenever $\theta_i>\theta_j$. 

This problem was first studied as a formal statistical problem by
Bechhofer (1954) and by Gupta (1956). There are many examples of this
problem arising naturally: we may wish to rank genes by the risk they
cause of a particular condition; we may wish to rank sportsmen by
their success-rate at particular standardised trials; we may wish to
rank business opportunities by the profit they will generate.
Typically, for each unit we wish to rank, we will have some data on
the associated feature, but will not know the true value of that
feature. Based on our data, we will have a point estimate for the
feature, and an associated error distribution. The amount of data we
might have for different units can vary wildly, meaning that the
associated error distributions can be very different for different
units. This means that the na\"{i}ve rank function that ranks $u_i$
before $u_j$ if $x_i>x_j$ will rank many false positives among the
highest ranked units. On the other hand, if the main aim is to avoid
false positives, a testing-based approach ranks the units according to
the $p$-values of hypothesis tests for a given null value. This
approach will often give high ranks to units on which we have
collected most data, even if they are not the most important. This may
lead to neglecting some units which have much higher underlying value,
but for which we have less data. A third method, posterior expected
rank (Laird and Louis, 1989) calculates the probability for each
pairwise rank based on the error distribution, then calculates the
expected rank for each unit as the total probability that it is ranked
after each other unit.

Other approaches to the problem mainly take an empirical Bayesian
approach. They assume that $\theta_i$ follow some prior distribution;
estimate this underlying distribution from all the data points; then
for unit $u_i$, estimate the posterior distribution for $\theta_i$
using the estimated distribution of the prior. Ranking is then
performed using the Bayes method for a chosen loss function. There are
a range of different methods based on different loss functions. Indeed
the three methods in the preceding paragraph correspond to three
different loss functions in a frequentist framework: the na\"{i}ve
approach maximises the expected total value of top units; the testing
approach maximises the expected value of a zero-one loss which is one
if $\rho(u_i)<\rho(u_j)$ and $\theta_i=\theta_{\textrm{Null}}$ and
$\theta_j>\theta_{\textrm{Null}}$. Posterior expected rank minimises
the expected total number of misranked pairs.  The same loss functions
could easily be applied in the Bayesian setting. Maximisation of
expected total value of top units is achieved by posterior mean
ranking, used for example in Aitkin and Longford (1986). This is shown
in Gupta and Hsiao (1983), though the statement is not made very
explicitly, and includes unnecessary hypotheses.  Another recently
developed loss function is the $r$-values method (Henderson and
Newton, 2016), which corresponds to a loss function being the sum of
absolute differences between estimated rank and true rank. This method
is based on the underlying distribution of both $\theta$ and
$\sigma$. A range of other loss functions have also been considered,
for example, Lin {\em et al}.  (2006) summarise a range of choices of
loss function.

While questions about the consistency of various approaches to the
loss-based ranking problem have led to interesting research (Duchi,
Mackey \& Jordan 2010, Duchi, Mackey \& Jordan 2013). Consistency
questions have not been studied to the same extent in the latent
variable ranking problem. This may be because in the classical problem
where the units are fixed and the estimates become more accurate, it
is obvious that the majority of reasonable methods are
consistent. However, in the era of big data, it is becoming more
common to study consistency not just as sample size increases (which
for latent variable ranking corresponds to the point estimates
becoming more accurate) but also as number of variables (in our case
units) increases. Studying this consistency issue is the focus of the
current paper. Typically in the literature, choice of prior
distribution and loss function are arbitrary. We therefore consider
consistency in the case where these are misspecified.
An additional challenge to studying consistency for the ranking
problem is how consistency should be defined for this problem, since
the quantity being estimated is a ranking, rather than the value of
some parameter.

%This problem is also very novel in that much of the previous
%research on consistency for high-dimensional data (for the ranking
%problem, the number of units is the dimension) has focussed on the
%sparse case, where only a small number of the dimensions are
%non-null. This paper studies the case of infinite-dimensional data
%where all dimensions are important.

The structure of this paper is as follows: In
Section~\ref{ConsistencyFramework}, we formalise the assumptions we
need on conditional distributions, loss functions and prior
distributions, and deduce a key result bounding the expected posterior
pairwise loss function. In Section~\ref{Proof}, we use this result to
prove consistency of ranking methods. In Section~\ref{Examples}, we
demonstrate through examples that if the prior distribution is too
light-tailed, then ranking methods can be inconsistent.

\section{Framework for Ranking Estimators}\label{ConsistencyFramework}

In this section, we outline the regularity conditions that we impose
on the problem and on the ranking estimators. We formally state the
problem as follows. We have a fixed sequence
$\Theta_1,\Theta_2,\ldots$ of true values. These values are
i.i.d. from some continuous true prior distribution. Now at every
stage $p$, we have point estimators $X_{1p},\ldots,X_{pp}$ for
$\Theta_1,\ldots,\Theta_p$. Every $X_{ip}$ is drawn independently
from a 2-parameter distribution $g(\Theta_i,\sigma_{ip})$, where
$\Theta_i$ is the mean and $\sigma_{ip}$ is the variance. We assume
that the $\sigma_{ip}$ are i.i.d. from some distribution
$h_p(\sigma)$ with finite mean and variance. We will assume that the
conditional distribution of $X_{ip}$ given $\Theta_i$ and
$\sigma_{ip}$ is known perfectly. In cases where it is uncertain, we
can often use model averaging to calculate a different form for the
conditional distribution. For example, if the conditional distribution
is normal, but the variance $\sigma_i$ is uncertain, and has posterior
distribution a scaled inverse chi-square distribution, then model
averaging leads us to use a Student's $t$ distribution instead.

The majority of ranking estimators are empirical Bayesian in nature,
and even the methods that are not can be recast in empirical Bayesian
form by considering improper prior distributions. A ranking method
consists of two parts: a prior distribution $\pi$ for $\Theta$ and a
sequence of loss functions $(L_p:{\mathbb R}^p\rightarrow{\mathbb
  R})_{p=1,2,\ldots}$ satisfying, for any
$\theta_1\geqslant\cdots\geqslant\theta_p$,
$L_p(\theta_1,\ldots,\theta_p)=0$. We will consider the misspecified
case where $\pi$ is different from the true prior. Given these two
pieces of data, the ranking estimator will be the permutation
$\hat{\rho}_p({\mathbf x})\in S\!_p$ which minimises the posterior
expected loss
$${\mathcal L}_{{\mathbf x},\pi,L_p}(\rho)={\mathbb
  E}_{\Theta|{\mathbf
    x}}\left(L_p\left(\Theta_{\rho^{-1}(1)},\Theta_{\rho^{-1}(2)},\ldots,\Theta_{\rho^{-1}(p)}\right)\right)$$
Where the posterior distribution $\Theta|x$ is calculated in the usual
way with density proportional to
$\pi(\theta)L_{x,\sigma}(\theta)$, where $L_{x,\sigma}(\theta)$ is the
likelihood of $\theta$. The regularity conditions on the
ranking estimator can therefore be divided into conditions on the
prior and conditions on the loss.

\subsection{Regularity Conditions on the Problem and on the Prior}

One of the key requirements for consistency of ranking methods is that
the posterior distribution of $\Theta_i$ should not be too far from
the observed value $X_{ip}$. In this section, we define natural
conditions on the conditional distribution of $X_{ip}$ given
$\Theta_i$ and $\sigma_{ip}$, and on the prior distribution of
$\Theta$, and show that under these conditions, the posterior mean is
close to the observed value, and the posterior variance is bounded.

Our first assumption is on the conditional distribution:

\begin{assumption}\label{RegularityCondition}
  There is an integrable likelihood function $L(X;\theta,\sigma)$ and
  a finite constant $K$
  satisfying
  $$\int_{-\infty}^\infty (K\sigma^2-(\theta-x)^2)L(x;\theta,\sigma)\,d\theta>0$$
\end{assumption}

The idea here is that the likelihood function is proportional to the
density function of a continuous distribution, and the mean of this
distribution is in the interval
$\left[x-\sqrt{K}\sigma,x+\sqrt{K}\sigma\right]$, and the variance is
bounded by $K\sigma^2$. The definition of the likelihood function can
become tricky for some distributions of $X$, so we will restate this
assumption more formally as:

\addtocounter{lemma}{-1}
\renewcommand{\theassumption}{\arabic{section}.\arabic{lemma}$'$}
\begin{assumption}\label{RegularityConditionFormal}
For all sufficiently small $\delta>0$, and for all $x$,
  $$\int_{-\infty}^\infty (K\sigma^2-(\theta-x)^2)P\left(X\in[x-\delta,x+\delta]\middle|\Theta=\theta,\sigma\right)\,d\theta>0$$
\end{assumption}
\renewcommand{\theassumption}{\arabic{section}.\arabic{lemma}}

We will define the random variable $\Theta_{L(x,\sigma)}$ to follow the
posterior distribution of $\Theta$ under an improper uniform
prior. That is, for continuous $\Theta_{L(x,\sigma)}$, the density is
proportional to the likelihood
$L(x;\theta,\sigma)$. Assumption~\ref{RegularityCondition} states that
$\Theta_{L(x,\sigma)}$ has mean close to $x$ and variance less than $K\sigma^2$.

%
%have
%density proportional to the likelihood of $\Theta$.
%
%We will informally
%write $$L_{x,\sigma}^*(\theta)=\frac{L(x;\theta,\sigma)}{\int_{-\infty}^\infty L(x;\phi,\sigma)\,d\phi}$$
%for the resulting normalised likelihood function, and formally define it as
%$$L_{x,\sigma}^*(\theta)=\lim_{\delta\rightarrow
%  0}\frac{P(X\in[x-\delta,x+\delta]|\Theta=\theta,\sigma)}{\int_{-\infty}^\infty P(X\in[t-\delta,t+\delta]|\Theta=\theta,\sigma)\,dt}$$
%We will denote the corresponding random variable $\Theta_{L(x,\sigma)}$.
%
\begin{lemma}\label{AlternativeRegularityCondition}
  If there is a constant $\epsilon>0$ such that for any $\delta>0$,
  and any $x$ and $x'$,
  $$\inf_{\stackrel{\theta\in[x-\epsilon,x+\epsilon]}{\scriptscriptstyle\theta'\in[x'-\epsilon,x'+\epsilon]}}P_{\theta,\sigma}(X\in
  [x'-\delta,x'+\delta])-\epsilon
  P_{\theta',\sigma}(X\in[x-\delta,x+\delta])>0$$
then Assumption~\ref{RegularityConditionFormal} holds.
\end{lemma}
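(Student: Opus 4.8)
The plan is to verify Assumption~\ref{RegularityConditionFormal} directly. Fix $\sigma$ and $x$, and for small $\delta>0$ write $F(\theta):=P_{\theta,\sigma}(X\in[x-\delta,x+\delta])$, viewed as the unnormalised likelihood of $\theta$. We must produce a finite $K$, not depending on $x$ or $\sigma$, with $\int_{-\infty}^\infty(K\sigma^2-(\theta-x)^2)F(\theta)\,d\theta>0$, i.e.\ with $K\sigma^2\int F\,d\theta$ dominating $\int(\theta-x)^2F\,d\theta$. The mechanism is a two-sided comparison of $F(\theta)$ with the ``forward'' probability $P_{x,\sigma}(X\in[\theta-\delta,\theta+\delta])$, read off from the hypothesis.

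First I would unwind the hypothesis: positivity of the displayed infimum means that for all $c_1,c_2$, all $s_1\in[c_1-\epsilon,c_1+\epsilon]$, all $s_2\in[c_2-\epsilon,c_2+\epsilon]$ and all $\delta>0$,
\[
P_{s_1,\sigma}(X\in[c_2-\delta,c_2+\delta])>\epsilon\,P_{s_2,\sigma}(X\in[c_1-\delta,c_1+\delta]).
\]
Taking $(c_1,c_2,s_1,s_2)=(x,\theta,x,\theta)$ yields $F(\theta)<\tfrac{1}{\epsilon}\,P_{x,\sigma}(X\in[\theta-\delta,\theta+\delta])$, and taking $(c_1,c_2,s_1,s_2)=(\theta,x,\theta,x)$ yields $F(\theta)>\epsilon\,P_{x,\sigma}(X\in[\theta-\delta,\theta+\delta])$, the containments $s_i\in[c_i-\epsilon,c_i+\epsilon]$ being automatic. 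I would then integrate these in $\theta$. By Tonelli, $\int_{-\infty}^\infty P_{x,\sigma}(X\in[\theta-\delta,\theta+\delta])\,d\theta=\mathbb E_{x,\sigma}\!\big[\int\mathbf{1}\{|X-\theta|\le\delta\}\,d\theta\big]=2\delta$ and $\int_{-\infty}^\infty(\theta-x)^2P_{x,\sigma}(X\in[\theta-\delta,\theta+\delta])\,d\theta=\mathbb E_{x,\sigma}\!\big[2\delta(X-x)^2+\tfrac{2}{3}\delta^3\big]$, which is finite since the error distribution has finite variance. Hence $\int F\,d\theta\ge 2\delta\epsilon$ and $\int(\theta-x)^2F\,d\theta\le\tfrac{1}{\epsilon}\big(2\delta\,\mathbb E_{x,\sigma}[(X-x)^2]+\tfrac{2}{3}\delta^3\big)$, so
\[
\int_{-\infty}^\infty(K\sigma^2-(\theta-x)^2)F(\theta)\,d\theta\ \ge\ 2\delta\Big(K\sigma^2\epsilon-\tfrac{1}{\epsilon}\,\mathbb E_{x,\sigma}[(X-x)^2]-\tfrac{\delta^2}{3\epsilon}\Big),
\]
which is positive once, say, $K>2\epsilon^{-2}\,\mathbb E_{x,\sigma}[(X-x)^2]/\sigma^2$ (a finite constant independent of $x$ and $\sigma$ under the paper's location--scale conventions) and $\delta$ is small enough that $\delta^2<\mathbb E_{x,\sigma}[(X-x)^2]$; that is precisely Assumption~\ref{RegularityConditionFormal}.

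The only genuinely delicate point is the bookkeeping in the second step — choosing the four free parameters so as to peel off both inequalities — and in particular recognising that it is the \emph{lower} bound $F(\theta)\ge\epsilon\,P_{x,\sigma}(X\in[\theta-\delta,\theta+\delta])$ that carries the argument: it forces $\int F\,d\theta$ to be at least of order $\delta$, matching the order of $\int(\theta-x)^2F\,d\theta$, which is what allows a single finite $K$ to work for every sufficiently small $\delta$ and every $\sigma$. Everything else is routine: finiteness of $\mathbb E_{x,\sigma}[(X-x)^2]$, and its being a fixed multiple of $\sigma^2$ (all that is actually used), comes from the standing assumption that $X$ has finite variance; the $\tfrac{2}{3}\delta^3$ term is the reason the conclusion holds only for sufficiently small $\delta$; and the same estimate incidentally gives $\int F\,d\theta\le 2\delta/\epsilon<\infty$, so the likelihood is integrable and $\Theta_{L(x,\sigma)}$ is a bona fide distribution.
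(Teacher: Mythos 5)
Your proof is correct and follows essentially the same route as the paper's: both apply the hypothesis twice (once to get the lower bound $F(\theta)\geqslant\epsilon P_{x,\sigma}(X\in[\theta-\delta,\theta+\delta])$ for the $K\sigma^2$ term, once to get the matching upper bound for the $(\theta-x)^2$ term) and then integrate via Tonelli to obtain $2\delta\epsilon K\sigma^2$ versus $2\delta\sigma^2/\epsilon$ up to an $O(\delta^3)$ remainder. Your version is in fact slightly more careful than the paper's, which silently drops the $\tfrac{2}{3}\delta^3$ term and states the final constant as $K=\sigma^2/\epsilon^2$ where $K>1/\epsilon^2$ is what is actually meant.
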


\begin{proof}
  W.l.o.g., we may assume $\epsilon<1$. We need to show that for all
  sufficiently small $\delta$, we have
  $$\int_{-\infty}^\infty (K\sigma^2-(\theta-x)^2)P(X\in[x-\delta,x+\delta]|\theta,\sigma)\,d\theta>0$$
  We have that ${\mathbb
    E}\left((X-\theta)^2|\theta,\sigma\right)<\sigma^2$. It follows
  that

  \begin{align*}
\int_{-\infty}^\infty
(K\sigma^2-(\theta-x)^2)P(X\in[x-\delta,x+\delta]|\Theta=\theta)\,d\theta&=\int_{-\infty}^\infty
K\sigma^2P(X\in[x-\delta,x+\delta]|\Theta=\theta)\,d\theta-\int_{-\infty}^\infty
(\theta-x)^2P(X\in[x-\delta,x+\delta]|\Theta=\theta)\,d\theta\\
&\geqslant\int_{-\infty}^\infty
K\sigma^2P(X\in[x-\delta,x+\delta]|\Theta=\theta)\,d\theta-\frac{1}{\epsilon}\int_{-\infty}^\infty
(\theta-x)^2P(X\in[\theta-\delta,\theta+\delta]|\Theta=x)\,d\theta\\
&\geqslant \int_{-\infty}^{\infty}
K\sigma^2P(X\in[x-\delta,x+\delta]|\Theta=\theta)\,d\theta-\frac{2\delta}{\epsilon}{\mathbb
  E}((X-x)^2)\\
&\geqslant \epsilon
K\sigma^2\int_{-\infty}^{\infty}P(X\in[\theta-\delta,\theta+\delta]|\Theta=x)\,d\theta-\frac{2\delta}{\epsilon}{\mathbb
  E}((X-x)^2)\\
&=2\delta\epsilon
K\sigma^2-\frac{2\delta\sigma^2}{\epsilon}
  \end{align*}
We can therefore ensure the result by setting
$K=\frac{\sigma^2}{\epsilon^2}$.   
\end{proof}

The condition in Lemma~\ref{AlternativeRegularityCondition} is like a
weakened form of translation-invariance. If $\theta$ is a location
parameter, and the error distribution is symmetric, then the condition
will hold for any $\epsilon<1$.

To ensure that the posterior mean and variance are close to the
observed value, we will also require the following conditions on the
prior distribution $\pi$:

\begin{enumerate}

\item $\pi$ is a continuous distribution.

\item $\pi$ is {\em quasiunimodal}, meaning there is some {\em
  quasimode} $m$ and some $\epsilon>0$ such that for any
  $x_2\leqslant x_1\leqslant m$ and for any $m\leqslant x_1\leqslant
  x_2$, we have $\pi(x_1)\geqslant\epsilon\pi(x_2)$. We see that this is a
  generalisation of unimodality --- $\pi$ is unimodal if it is
  quasiunimodal with $\epsilon=1$.
  
\end{enumerate}

\begin{lemma}
  If $m$ is a quasimode for a continuous distribution $\pi$, then so
  is any other $x$ in the support of $\pi$.
\end{lemma}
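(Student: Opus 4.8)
The plan is to reduce the statement to the claim that every point $x_0$ with $\pi(x_0)>0$ is itself a quasimode (with a possibly different constant $\epsilon'$); this is what ``in the support'' should be taken to mean here, since a boundary point of the support at which the continuous density $\pi$ vanishes is genuinely not a quasimode. Two elementary observations will do most of the work. First, setting $x_1=m$ in the two defining inequalities gives $\pi(m)\geqslant\epsilon\,\pi(v)$ for \emph{every} $v$, so $m$ is an approximate global maximiser of $\pi$. Second, the quasimode property forces $\{\pi>0\}$ to be an interval: if $\pi(y_1),\pi(y_2)>0$ with $y_1<y_2$ and $y\in(y_1,y_2)$, then $y$ lies on the same side of $m$ as, and between $m$ and, one of $y_1,y_2$, and the relevant defining inequality yields $\pi(y)\geqslant\epsilon\min(\pi(y_1),\pi(y_2))>0$. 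Since $\pi$ is continuous, $\{\pi>0\}$ is open, hence an open interval, and a first use of the previous observation shows it contains $m$.

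With these in hand, fix $x_0$ with $\pi(x_0)>0$. Using the reflection $x\mapsto 2m-x$, under which the hypotheses are preserved, I may assume $x_0\geqslant m$. Then $[m,x_0]$ is compact and contained in the open interval $\{\pi>0\}$, so $c:=\min_{x\in[m,x_0]}\pi(x)>0$ by continuity, and I would show $x_0$ is a quasimode with $\epsilon'=c\epsilon/\pi(m)$. For $x_0\leqslant x_1\leqslant x_2$ one has $m\leqslant x_1\leqslant x_2$, and the defining inequality for $m$ already gives $\pi(x_1)\geqslant\epsilon\,\pi(x_2)\geqslant\epsilon'\pi(x_2)$. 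For $x_2\leqslant x_1\leqslant x_0$: if $x_1\leqslant m$ then $x_2\leqslant x_1\leqslant m$ and $\pi(x_1)\geqslant\epsilon\,\pi(x_2)$ directly; if $x_1>m$ then $x_1\in[m,x_0]$ gives $\pi(x_1)\geqslant c$, while the first observation gives $\pi(x_2)\leqslant\pi(m)/\epsilon$, whence $\pi(x_1)\geqslant(c\epsilon/\pi(m))\pi(x_2)=\epsilon'\pi(x_2)$. This exhausts the cases.

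The only step with any real content is the sub-case $x_1>m$ above: there one is moving \emph{away} from $m$, so the quasimode property of $m$ supplies no lower bound on $\pi(x_1)$ in terms of $\pi(x_2)$, and one must instead invoke that $\pi$ is bounded below by a positive constant on the compact segment $[m,x_0]$. So the genuine ingredients are continuity of $\pi$ (prior condition (1)) together with the interval structure of $\{\pi>0\}$, itself a consequence of the quasimode property; the remainder is bookkeeping over the positions of $x_1,x_2$ relative to $m$.
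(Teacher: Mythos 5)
Your proof is correct, and it handles the same case split as the paper (the position of $x_1$ relative to $m$, after reducing to $\pi(x)>0$, which is indeed the right reading of ``support''). The one place you diverge is the sub-case $m<x_1\leqslant x_0$ with $x_2\leqslant x_1$: you claim the quasimode property of $m$ gives no usable lower bound on $\pi(x_1)$ and therefore route through the interval structure of $\{\pi>0\}$, compactness of $[m,x_0]$ and continuity to produce $c=\min_{[m,x_0]}\pi>0$. That works, but it is heavier than necessary: the triple $m\leqslant x_1\leqslant x_0$ is itself an instance of the defining inequality for $m$, so $\pi(x_1)\geqslant\epsilon\,\pi(x_0)$ directly, and combining with your first observation $\pi(x_2)\leqslant\pi(m)/\epsilon$ yields the paper's constant $\epsilon^{2}\pi(x_0)/\pi(m)$ with no appeal to continuity or compactness at all (indeed your $c$ satisfies $c\geqslant\epsilon\,\pi(x_0)$ by the same remark, so your constant is the same or slightly better). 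The paper's argument thus isolates that the lemma is purely a consequence of the quasimode inequality plus the approximate-global-maximum property of $m$, whereas your version additionally uses prior condition (1); your detour does buy the (correct, and worth noting) side observations that $\{\pi>0\}$ is an open interval containing $m$ and that density-zero boundary points of the support genuinely fail to be quasimodes, which the paper leaves implicit.
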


\begin{proof}
W.l.o.g., let $x_2\leqslant x_1\leqslant x$. We want to find some
$c>0$ such that $\pi(x_1)\geqslant c\pi(x_2)$. We always have
$m\leqslant m\leqslant x_2$, or $m\geqslant m\geqslant x_2$ so $\pi(m)\geqslant\epsilon\pi(x_2)$.
There are two cases to consider:
\begin{enumerate}
\item $m\leqslant x_1$:  In this case,
  $\pi(x_1)\geqslant\epsilon\pi(x)$, so that
  $$\pi(x_1)\geqslant \epsilon\pi(x)\left(\frac{\epsilon\pi(x_2)}{\pi(m)}\right)=\left(\frac{\epsilon^2\pi(x)}{\pi(m)}\right)\pi(x_2)$$

\item $m\geqslant x_1$: In this case,
  $$\pi(x_1)\geqslant\epsilon\pi(x_2)$$
\end{enumerate}
Since $\pi(m)\geqslant\epsilon\pi(x)$, we have
$\frac{\epsilon^2\pi(x)}{\pi(m)}\leqslant \epsilon$.
Thus, $x$ is also a quasimode with $c=\frac{\epsilon^2\pi(x)}{\pi(m)}$.
\end{proof}

The reason these properties are necessary, is that if both the prior
and conditional distribution are allowed to be discrete, then by
controlling the support of both prior and conditional distribution, we
can cause pathological behaviour in the posterior distribution.

Finally, we need to control the tail of the prior distribution, since if
the prior is too light-tailed, the posterior distribution will be
dominated by the prior, rather than the observed data.

\begin{definition}
  A prior distribution $\Theta$ with density $\pi(\theta)$ is
  {\em tail-dominating} if there are constants $r$ and $s$, such that for
  any $a>0$ and any $x>a$, we have $${\mathbb
    E}(((\Theta_{L;x,\sigma}-x)^2-a^2)_+)<s\sigma a\pi\left(\frac{ra}{\sigma}\right)$$
  \end{definition}

This will typically hold when the prior distribution is as
heavy-tailed as the conditional distribution, so it will usually hold
for the conjugate prior distribution, but not for more light-tailed priors.

\begin{examples}
For a normal conditional distribution with fixed variance $x\sim
N(\Theta,\sigma^2)$, the likelihood distribution is
$\Theta_{L;x,\sigma}\sim N(x,\sigma^2)$,
so we have
\begin{align*}
{\mathbb E}(((\Theta_{L;x,\sigma}-x)^2-a^2)_+)&=2\int_{x+a}^\infty
\left((\theta-x)^2-a^2\right)\frac{e^{-\frac{(\theta-x)^2}{2\sigma^2}}}{\sqrt{2\pi}\sigma}\,d\theta\\
&=2\int_{a}^\infty
\left(t^2-a^2\right)\frac{e^{-\frac{t^2}{2\sigma^2}}}{\sqrt{2\pi}\sigma}\,dt\\
&\leqslant \frac{2}{\sqrt{2\pi}\sigma}\int_a^\infty t^2e^{-\frac{t^2}{2\sigma^2}}\,dt\\
&=
\frac{2}{\sqrt{2\pi}\sigma}\left(\left[-\sigma^2te^{-\frac{t^2}{2\sigma^2}}\right]_a^\infty+\int_a^\infty
\sigma^2e^{-\frac{t^2}{2\sigma^2}}\,dt\right)\\
&=2\sigma^2\left(a\frac{e^{-\frac{a^2}{2\sigma^2}}}{\sqrt{2\pi}\sigma}+1-\Phi\left(\frac{a}{\sigma}\right)\right)\\
&\leqslant s\sigma a e^{-\frac{1}{2}\left(\frac{a}{\sigma}\right)^2}
\end{align*}
for some $s$. Thus, for any $\tau$, we can set $r=\tau$, to get that a
normal prior is tail-dominating.
\end{examples}

\begin{lemma}\label{PostMeanVar}
If $X$ is a random variable with mean $\Theta$ and variance
$\sigma^2$, satisfying Assumption~\ref{RegularityConditionFormal} and
the prior distribution for $\Theta$ is tail-dominating continuous
quasiunimodal, then there is some $c$ such that the
posterior distribution of $\Theta|x$, given an observation $X=x>0$ has
mean $\mu>x-c(x+1)\sigma$ and variance $s^2<c\sigma^2(x+1)^2$ for any
$\sigma$ such that $\sigma c<1$.
\end{lemma}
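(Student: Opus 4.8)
The plan is to derive both conclusions from a single estimate on the posterior second moment of $\Theta$ about the observation $x$. Writing the posterior density as proportional to $\pi(\theta)L(x;\theta,\sigma)$, set $Z=\int\pi(\theta)L(x;\theta,\sigma)\,d\theta$ (finite since $L$ is integrable) and, after rescaling $L$ without changing the posterior, assume $\int L(x;\theta,\sigma)\,d\theta=1$. For any threshold $a>0$,
$$\mathbb{E}_{\Theta|x}\bigl((\Theta-x)^2\bigr)=\frac1Z\int(\theta-x)^2\pi(\theta)L(x;\theta,\sigma)\,d\theta\leqslant a^2+\frac1Z\int\bigl((\theta-x)^2-a^2\bigr)_+\pi(\theta)L(x;\theta,\sigma)\,d\theta.$$
I would take $a$ a fixed multiple of $(x+1)\sigma$, shrinking it to a fixed fraction of $x$ when $\sigma$ fails to be small (so that $a<x$, which the tail-dominating hypothesis requires; this is where $c\sigma<1$ enters). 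Once the right-hand side is bounded by $c_0(x+1)^2\sigma^2$ with $c_0$ depending only on $\pi,K,r,s,\epsilon$, the posterior variance is $\leqslant c_0(x+1)^2\sigma^2$ and $(\mu-x)^2\leqslant c_0(x+1)^2\sigma^2$, so $\mu\geqslant x-\sqrt{c_0}\,(x+1)\sigma$; any $c>\max(c_0,\sqrt{c_0})$ then works.

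First I would lower-bound $Z$. Assumption~\ref{RegularityConditionFormal} says precisely that $\mathbb{E}\bigl((\Theta_{L;x,\sigma}-x)^2\bigr)<K\sigma^2$, so Chebyshev gives $\int_{|\theta-x|\leqslant\sqrt{2K}\sigma}L(x;\theta,\sigma)\,d\theta>\tfrac12$; and on the interval $[x-\sqrt{2K}\sigma,\,x+\sqrt{2K}\sigma]$ quasiunimodality gives $\pi(\theta)\geqslant\epsilon\min\bigl(\pi(x-\sqrt{2K}\sigma),\pi(x+\sqrt{2K}\sigma)\bigr)$, so $Z\geqslant\tfrac{\epsilon}{2}\min\bigl(\pi(x-\sqrt{2K}\sigma),\pi(x+\sqrt{2K}\sigma)\bigr)$.

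Next I would bound the truncated numerator $N=\int\bigl((\theta-x)^2-a^2\bigr)_+\pi(\theta)L(x;\theta,\sigma)\,d\theta$. Splitting $\{|\theta-x|\geqslant a\}$ at the quasimode $m$, quasiunimodality gives $\pi(\theta)\leqslant\pi(x)/\epsilon$ on the side of $x$ away from $m$ and the global bound $\pi(\theta)\leqslant\pi(m)/\epsilon$ everywhere, so in either piece one is left with $\int\bigl((\theta-x)^2-a^2\bigr)_+L(x;\theta,\sigma)\,d\theta=\mathbb{E}\bigl(((\Theta_{L;x,\sigma}-x)^2-a^2)_+\bigr)$, which the tail-dominating hypothesis bounds by $s\sigma a\,\pi(ra/\sigma)$; hence $N\leqslant\tfrac1\epsilon\max(\pi(x),\pi(m))\,s\sigma a\,\pi(ra/\sigma)$. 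Dividing by the lower bound for $Z$ and substituting $a\asymp(x+1)\sigma$ (so that $ra/\sigma\asymp x+1$), $N/Z$ is at most a constant times $(x+1)\sigma^2$ multiplied by
$$\frac{\max(\pi(x),\pi(m))\,\pi\bigl(r\sqrt{2K}(x+1)\bigr)}{\min\bigl(\pi(x-\sqrt{2K}\sigma),\pi(x+\sqrt{2K}\sigma)\bigr)} .$$
The crux is to show this ratio is $O(x+1)$ uniformly in $x>0$ and $\sigma<1/c$; tail-dominating is exactly what makes $\pi\bigl(r\sqrt{2K}(x+1)\bigr)$ small enough to cancel any decay of $\pi$ between $x$ and $x\pm\sqrt{2K}\sigma$ that the (mild) quasiunimodality hypothesis allows, once $c$ is chosen large enough that $\sigma<r$. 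Adding $a^2=O((x+1)^2\sigma^2)$ finishes the estimate.

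The step I expect to be the main obstacle is precisely this last balancing of the prior evaluations $\pi(x)$, $\pi(m)$, $\pi(x\pm\sqrt{2K}\sigma)$ and $\pi(ra/\sigma)$ as $x$ ranges over $(0,\infty)$: it is here that the tail-dominating hypothesis is indispensable, since without it $\mathbb{E}\bigl(((\Theta_{L;x,\sigma}-x)^2-a^2)_+\bigr)$ need not be controlled by $s\sigma a\,\pi(ra/\sigma)$ at all and the posterior can be dragged an unbounded distance from $x$ --- the inconsistency examples later in the paper are of exactly this type. A secondary, purely technical point is the regime where $x$ is small and $\sigma$ is near its upper limit $1/c$: there $a\asymp(x+1)\sigma$ may exceed $x$, so one instead takes $a=x/2$, notes that $a^2=x^2/4$ is itself $O((x+1)^2\sigma^2)$ throughout that regime and that $ra/\sigma$ is then bounded, so the same computation goes through with $\pi$ replaced by the constant $\sup\pi\leqslant\pi(m)/\epsilon$.
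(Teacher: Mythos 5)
Your proposal is correct and follows essentially the same route as the paper's proof: both control the posterior second moment about $x$ with a threshold $a \asymp c\sigma(x+1)$, bound the far tail by the tail-dominating hypothesis and then cancel the resulting prior evaluation $\pi(ra/\sigma)$ against the near-$x$ prior values using quasiunimodality (once $rc\geqslant 1$), and lower-bound the contribution near $x$ using Assumption~\ref{RegularityConditionFormal} plus Chebyshev. The only organizational difference is that the paper shows the signed integral $\int\left((\theta-x)^2-a^2\right)\pi(\theta)L(x;\theta,\sigma)\,d\theta$ is negative by comparing its positive and negative parts directly, which sidesteps your explicit lower bound on the normalizing constant $Z$; your observation that the tail-dominating bound formally requires $a<x$ (so that small $x$ needs separate treatment) is a point the paper itself glosses over.
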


\begin{proof}
  The posterior mean is given by
$\mu={\mathbb E}_{\Theta|X}(\Theta)$,  and the posterior variance is given by 
  $s^2={\mathbb E}_{\Theta|X}((\Theta-\mu)^2)$.
  We therefore have
  $s^2+(\mu-x)^2={\mathbb E}_{\Theta|X}((\Theta-x)^2)$
  so it is sufficient to show that
 ${\mathbb E}_{\Theta|X}((\Theta-x)^2-a^2)<0$, where
  $a=c\sigma(x+1)$.

This is equivalent to showing that ${\mathbb E}\left(\left(\left(\Theta_{L(x,\sigma)}-x\right)^2-a^2\right)\pi\left(\Theta_{L(x,\sigma)}\right)\right)<0$.  
We will separately consider the negative part and the positive part.
The negative part contains the interval
$|\Theta-x|<\frac{c\sigma}{2}$, on which $\pi(\theta)>\epsilon\pi(x+c\sigma)$
and $a^2-(\Theta-x)^2>c^2\sigma^2\left(x^2+2x+\frac{3}{4}\right)$.
Therefore
\begin{align*}
{\mathbb E}\left(\left(\left(\Theta_{L(x,\sigma)}-x\right)^2-a^2\right)_-\pi\left(\Theta_{L(x,\sigma)}\right)\right)&>\epsilon\pi(x+c\sigma)\left(c^2\sigma^2\left(x^2+2x+\frac{3}{4}\right)\right)P\left(x-\frac{c}{2}\sigma<\Theta_{L(x,\sigma)}<x+\frac{c}{2}\sigma\right)\\
&>\epsilon\pi(x+c\sigma)\left(c^2\sigma^2\left(x^2+2x+\frac{3}{4}\right)\right)\left(1-\frac{4K}{c^2}\right)\\
\end{align*}

Meanwhile, for the positive part, we have 
\begin{align*}
{\mathbb E}\left(\left(\left(\Theta_{L(x,\sigma)}-x\right)^2-a^2\right)_+\pi\left(\Theta_{L(x,\sigma)}\right)\right)&<\pi(0)\int_{-\infty}^\infty\left((\theta-x)^2-a^2\right)_+L_{x,\sigma}(\theta)\,d\theta\\
&<\pi(0)s\sigma a\pi\left(\frac{ra}{\sigma}\right)\\
&=\pi(0)sc\sigma^2(x+1)\pi\left(rc(x+1)\right)\\
&\leqslant \frac{\pi(0)s c\sigma^2(x+1)\pi(x+1)}{\epsilon}
\end{align*}
where the last line is under the assumption that $rc>1$. Thus, it is sufficient to show that 
  $$\frac{\pi(0)s c\sigma^2(x+1)\pi(x+1)}{\epsilon}<\epsilon\pi(x+c\sigma)\left(c^2\sigma^2\left(x^2+2x+\frac{3}{4}\right)\right)\left(1-\frac{4K}{c^2}\right)$$
Since $\pi$ is quasiunimodal, we have that $\pi(x+c\sigma)>\epsilon
\pi\left(x+1\right)$ as $\sigma c<1$. If we further set
$\frac{c}{\sqrt{K}}>4$, then $\left(1-\frac{4K}{c^2}\right)>\frac{3}{4}$. Therefore, the
required inequality becomes
$$x+1<\frac{9\epsilon^3c}{16s\pi(0)}\left(x^2+2x+\frac{3}{4}\right)$$
Thus it is true for all $x\geqslant 0$ whenever
$c>\frac{64s\pi(0)}{27\epsilon^3}\lor 4K\lor\frac{1}{r}$.
\end{proof}

\subsection{Regularity Conditions on Loss Functions}

For the loss function, we need to ensure both that worse misrankings
incur higher losses, and also that there is consistency between the
loss functions at different stages (that is, between the different
$L_p$). The simplest way to achieve this is using the class of {\em
  additive} loss functions. A loss function is additive if it is given
by
$$L_p(\theta_1,\ldots,\theta_p)=s(p)\sum_{i<j}l(\theta_i,\theta_j)$$
where $s(p)$ is an arbitrary scaling function, and
$l(x,y)$ is a pairwise loss function, which is zero if
$x\geqslant y$ and positive if $x<y$. We will say that $L_p$ is the
additive loss function {\em generated by} $l$.
Note that the scaling function $s(p)$ does not affect the estimated rankings $\hat{\rho}_p$. We will furthermore require the following properties of
the pairwise loss function:

\begin{definition}
  A binary function $l(x,y)$ is {\em restrained} if it satisfies the
  following conditions:

\begin{itemize}

\item $l(x,y)$ is increasing in $y$ and decreasing in $x$.

\item $l$ is Lipschitz with constant $\lambda$ on the set
  $\{(x,y)|x<y\}$.

\item There are constants $a,b>0$ such that for any $x,y$
  satisfying $x<y<x+a$, we have $l(x,y)>b(y-x)$.

\item There is a constant $D$ such that for all $\epsilon>0$,
  $l(x,x+\epsilon)<D+\lambda\epsilon$. 
\end{itemize}

\end{definition}

Many ranking methods in the literature are empirical Bayes for
additive loss functions generated by restrained pairwise loss
functions. However, to improve generality, we extend the class of loss
functions studied by considering equivalent loss functions.

\begin{definition}
Sequences of loss functions $L_p$ and $M_p$ are {\em equivalent} if there are
constants $0<c<u$ such that for all $p$ and for all $x_1,\ldots,x_p$, we have
$cL_p(x_1,\ldots,x_p)<M_p(x_1,\ldots,x_p)<uL_p(x_1,\ldots,x_p)$
\end{definition}

\begin{definition}
  A sequence of loss functions $L_p$ is {\em %quasi-additive restrained
    regular} if it is equivalent to an additive loss function
  generated by a restrained pairwise loss.
\end{definition}

\begin{definition}
  We will refer to the ranking method based on minimising the
  posterior expectation of a loss function ${L}_p$, under a prior
  distribution $\pi$ as the {\em Bayesian ranking} for ${L}$ and
  $\pi$.  A ranking method is {\em standard} if it minimises posterior
  expectation of loss under a regular loss function and a
  tail-dominating quasiunimodal continuous prior distribution.
\end{definition}

\subsection{Common Loss Functions}

It turns out that the above definitions of regular loss functions are
sufficient to cover all commonly used ranking methods. In this
section, we show how existing ranking methods can be cast into this
framework.

\subsubsection{Value Ranking}

When the conditional distribution is translation-invariant, if we use
an improper uniform prior, we have that the posterior mean is equal to
the observed value. This means that when we use a uniform prior for
translation-invariant conditional distribution, value ranking is a
special case of posterior mean ranking.

\subsubsection{Posterior Expected Rank}

Posterior Expected Rank is a positional loss function, in that the
loss for misranking units with true values $\theta_i$ and $\theta_j$
depends upon the true ranks of those units. In particular, the loss is
proportional to the difference between those ranks. However, for any
particular true prior distribution, there is an additive loss function
that will asymptotically approach PER loss. If the true prior
distribution has distribution function $F(\theta)$, then PER
asymptotically minimises the additive loss function generated by
$l(\theta_i,\theta_j)=|F(\theta_j)-F(\theta_i)|$. Under normal
regularity conditions for the distribution, this function is
restrained.

\subsubsection{$p$-value Ranking}

$p$-value ranking is based on improper uniform prior, and additive loss
function generated by $$l(x,y)=\left\{\begin{array}{ll}
1&\textrm{if }x<\theta_0,y\geqslant\theta_0\\
1&\textrm{if }x\leqslant\theta_0,y>\theta_0\\
0&\textrm{otherwise}\end{array}\right.$$
where $\theta_0$ is the null hypothesis value for $\theta$. This loss
function is not restrained, since for $\theta_0<x<y$ we have $l(x,y)=0$.

\subsubsection{Posterior Mean Ranking}

Posterior mean ranking minimises the additive loss function generated by
$$l(x,y)=(y-x)_+$$

This is proved (with some unnecessary hypotheses) in
%\cite{GuptaHsiao}
(Gupta and Hsiao, 1983). However, we present a simpler proof here.

The total loss for a ranking $\rho$ is given by
\begin{align*}
  \sum_{\rho(i)<\rho(j)}l(\theta_i,\theta_j)&=\frac{1}{2}\left(\sum_{\rho(i)<\rho(j)}\left(l(\theta_i,\theta_j)+l(\theta_j,\theta_i)\right)+\sum_{\rho(i)<\rho(j)}\left(l(\theta_i,\theta_j)-l(\theta_j,\theta_i)\right)\right)\\
  &=\frac{1}{2}\left(\sum_{i,j}|\theta_i-\theta_j|+\sum_{\rho(i)<\rho(j)}(\theta_j-\theta_i)\right)
\end{align*}
$c=\sum_{i,j=1}^p|\theta_i-\theta_j|$ does not depend on the choice of
ranking, so the ranking method that minimises this loss function also
minimises the second term
$\sum_{\rho(i)<\rho(j)}(\theta_j-\theta_i)$. The posterior expected
loss is therefore ${\mathbb
  E}_{\Theta|X}\left(\sum_{\rho(i)<\rho(j)}(\Theta_j-\Theta_i)+c\right)=\sum_{\rho(i)<\rho(j)}({\mathbb
  E}(\Theta_j|X_j)-{\mathbb E}(\Theta_i|X_i))+c$, which is easily seen
to be minimised by posterior mean ranking.

\subsubsection{$r$-value Ranking}

For the $r$-values method, we have the following

\begin{lemma}
  For a given sequence $\theta_1,\ldots,\theta_p$, and a given
  permutation $\rho\in S_p$, let
  $\tau(i)=\left|\{j\in\{1,\ldots,p\}|\theta_j\geqslant\theta_i\}\right|$
  be the true ranking.
Let $R_p(\theta_{\rho^{-1}(1)},\ldots,\theta_{\rho^{-1}(p)})=\sum_{i=1}^n
\left|\rho(i)-\tau(i)\right|$
be the loss function corresponding to the $r$-values method. Let
$L_p(\theta_{\rho^{-1}(1)},\ldots,\theta_{\rho^{-1}(p)})=\sum_{\rho(i)<\rho(j)}
1_{\theta_i<\theta_j}$. $L_p$ is an additive loss function. We have
  $$\frac{1}{2}L_p(\theta_{\rho^{-1}(1)},\ldots,\theta_{\rho^{-1}(p)})\leqslant
R_p(\theta_{\rho^{-1}(1)},\ldots,\theta_{\rho^{-1}(p)})\leqslant 2L_p(\theta_{\rho^{-1}(1)},\ldots,\theta_{\rho^{-1}(p)})$$
\end{lemma}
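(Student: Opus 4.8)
\medskip
\noindent\textit{Proof plan.} The plan is to reduce the claim to a classical comparison between two metrics on permutations --- Spearman's footrule and Kendall's inversion count --- to establish one inequality by an elementary counting identity, and to invoke the Diaconis--Graham inequality for the other.

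First I would record the easy observations. Since $1_{\theta_i<\theta_j}$ vanishes exactly when $\theta_i\geqslant\theta_j$, the loss $L_p$ is the additive loss function generated by the pairwise loss $l(x,y)=1_{x<y}$ with scaling $s(p)=1$; this gives the first assertion of the lemma. Both $L_p$ and $R_p$ are unchanged if we relabel the units and relabel $\rho$ accordingly, so, assuming as we may that the $\theta_i$ are distinct (they are drawn from continuous distributions), I may take $\theta_1>\theta_2>\cdots>\theta_p$. Then $\tau$ is the identity permutation, $R_p$ evaluated at $(\theta_{\rho^{-1}(1)},\ldots,\theta_{\rho^{-1}(p)})$ equals the Spearman footrule $D(\rho):=\sum_{i=1}^p|\rho(i)-i|$, and $L_p$ at the same argument equals the inversion count $I(\rho):=|\{(i,j):i<j,\ \rho(i)>\rho(j)\}|$. (Without relabelling, one replaces $\rho$ throughout by $\rho\circ\tau^{-1}$, which changes neither quantity.) It then suffices to prove $\frac{1}{2} I(\rho)\leqslant D(\rho)\leqslant 2I(\rho)$ for every $\rho\in S_p$.

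For the upper bound $D(\rho)\leqslant 2I(\rho)$ I would use the identity, obtained by counting how many of the values at positions before, and at positions after, $i$ fall below $\rho(i)$,
$$\rho(i)-i=|\{j>i:\rho(j)<\rho(i)\}|-|\{j<i:\rho(j)>\rho(i)\}| ,$$
so that $(\rho(i)-i)_+\leqslant|\{j>i:\rho(j)<\rho(i)\}|$. Summing over $i$, and using that $\sum_i(\rho(i)-i)=0$ forces $\sum_i(\rho(i)-i)_+=\frac{1}{2} D(\rho)$, the right-hand sides sum to $I(\rho)$, which yields $\frac{1}{2} D(\rho)\leqslant I(\rho)$. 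This is the short half.

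The main obstacle is the lower bound $\frac{1}{2} I(\rho)\leqslant D(\rho)$. In fact the Diaconis--Graham inequality $I(\rho)\leqslant D(\rho)$ holds (equivalently $L_p\leqslant R_p$, which is stronger than needed), and I would either cite it or prove it by induction on $I(\rho)$: if $\rho\neq\mathrm{id}$ there are consecutive values $w$ and $w+1$ with $w+1$ occurring before $w$, and swapping their positions removes exactly one inversion (no value lies strictly between them) while changing $D(\rho)$ by $0$ or $\pm2$. The delicate point --- and the reason this step is not purely routine --- is that the swap need not decrease $D$: for $\rho=(p,2,3,\ldots,p-1,1)$ every inversion-removing swap of consecutive values leaves $D$ unchanged. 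This is handled, as in Diaconis and Graham's argument, by choosing the swap so as to control the transposition distance $T(\rho)$ simultaneously and tracking $I+T$ against $D$. Granting the lower bound, $\frac{1}{2} I(\rho)\leqslant I(\rho)\leqslant D(\rho)$, and together with the upper bound this gives $\frac{1}{2} L_p\leqslant R_p\leqslant 2L_p$.
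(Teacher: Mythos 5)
Your proposal is correct, and it reaches the same two inequalities by a genuinely different pair of arguments. You correctly identify the content of the lemma as the classical comparison between Spearman's footrule $D$ and Kendall's inversion count $I$ (after reducing, as you may, to distinct $\theta_i$ and $\tau=\mathrm{id}$ --- the paper makes the same implicit distinctness assumption, which is genuinely needed, since with all $\theta_i$ equal one has $L_p=0$ but $R_p>0$). For the direction $R_p\leqslant 2L_p$ the paper sorts $\rho$ into $\tau$ by a minimal sequence of $L_p$ adjacent transpositions and observes each changes $R_p$ by at most $2$; your positional identity $\rho(i)-i=|\{j>i:\rho(j)<\rho(i)\}|-|\{j<i:\rho(j)>\rho(i)\}|$, summed over $i$, gives the same bound more algebraically and is equally valid. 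For the direction $\tfrac{1}{2}L_p\leqslant R_p$ the paper gives a self-contained counting argument: it splits the units $j$ crossed by $i$ into those whose true rank lies between $\tau(i)$ and $\rho(i)$ (the sets $N_i$) and those whose rank does not (the sets $R_i$), and bounds $\sum|N_i|$ and $\sum|R_i|$ each by $R_p$, yielding $\sum_i|C_i|\leqslant 2R_p$ --- in effect a direct proof of the Diaconis--Graham bound $I\leqslant D$ that avoids the transposition distance $T$ entirely. You instead cite Diaconis--Graham (or propose reproducing their induction tracking $I+T$ against $D$), and you correctly flag the precise subtlety that defeats the naive induction, namely that an inversion-removing swap of consecutive values can leave $D$ unchanged, as your example $\rho=(p,2,\ldots,p-1,1)$ shows. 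Citing the classical result is legitimate and your sketch is sound; what the paper's crossing-set argument buys is self-containment and a proof that does not need the auxiliary statistic $T$, while your route buys brevity and makes the connection to the known metric comparison explicit.
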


\begin{proof}
We have that $\tau$ is the permutation such that $\tau(i)<\tau(j)$
whenever $\theta_i>\theta_j$.  The right-hand inequality is easy ---
$\tau\rho^{-1}$ can be generated by a minimal sequence of adjacent
transpositions. The number of these transpositions is
$L_p\left(\theta_{\rho^{-1}}(1),\ldots,\theta_{\rho_p^{-1}}(p)\right)$. The most by which an adjacent transposition can increase the
pointwise loss $R_p$ is 2 --- that is,
$R_p(x_1,\ldots,x_{i+1},x_{i},\ldots x_p)\leqslant R_p(x_1,\ldots,x_{i},x_{i+1},\ldots x_p)+2$. 
  For the second part, we will say that $\tau$ {\em crosses} $\rho$ on
  the pair $(i,j)$ if $\tau$ and $\rho$ disagree whether $i$ should be
  ranked before $j$. Let
  $$R_i=\left\{\begin{array}{ll} \{j|\tau(j)<\tau(i)\leqslant
  \rho(i)<\rho(j)\}\cup\{j|\rho(j)<\rho(i)<\tau(j)\}& \textrm{if
  }\tau(i)\leqslant
  \rho(i)\\ \{j|\tau(j)>\tau(i)>\rho(i)>\rho(j)\}\cup\{j|\tau(j)<\rho(i)<\rho(j)\}&
  \textrm{if }\tau(i)>\rho(i)\end{array}\right.$$
  be the set of
  units $j$ for which $\tau$ crosses $\rho$ on $(i,j)$, but $\tau(j)$
  is not between $\tau(i)$ and $\rho(i)$.  Let
  $$N_i=\left\{\begin{array}{ll}
  \{j|\tau(i)<\tau(j)\leqslant\rho(i)\land\rho(j)<\rho(i)\}&\textrm{if }\tau(i)\leqslant \rho(i)\\
  \{j|\tau(i)>\tau(j)\geqslant\rho(i)\land\rho(j)>\rho(i)\}&\textrm{if }\tau(i)>\rho(i)
  \end{array}\right.$$
  be the set of units $j$ for which $\tau$ crosses $\rho$ on $(i,j)$, and $\tau(j)$ is between
  $\tau(i)$ and $\rho(i)$.
We see that if $\tau$ crosses $\rho$ on $(i,j)$, then we must have $j\in R_{i}\cup
N_i$. Let
$$C_i=\{j|\tau(j)<\tau(i)\land\rho(j)>\rho(i)\}\cup\{j|\tau(j)>\tau(i)\land\rho(j)<\rho(i)\}$$
be the set of units for which $\tau$ crosses $\rho$ on $(i,j)$. We have $|C_i|\leqslant |R_i|+|N_i|$. Furthermore, we have
that $R_p(\theta_{\rho^{-1}(1)},\ldots,\theta_{\rho^{-1}(p)})\geqslant
\sum |R_i|$, since for any $i$ with $j\in R_i$, we have
$\tau(j)\leqslant \rho(i)<\rho(j)$ or $\tau(j)\geqslant \rho(i)>\rho(j)$, so $|\rho(j)-\tau(j)|\geqslant
|\{i|j\in R_i\}|$. Also, $N_i\subseteq
\{j|\tau(i)<\tau(j)\leqslant\rho(i)\}\cup\{j|\tau(i)>\tau(j)\geqslant\rho(i)\}$, so we must have $R_p(\theta_{\rho^{-1}(1)},\ldots,\theta_{\rho^{-1}(p)})\geqslant
\sum_{i=1}^n|N_i|$. This gives us
$$L_p(\theta_{\rho^{-1}(1)},\ldots,\theta_{\rho^{-1}(p)})=\sum_{i=1}^n|C_i|\leqslant
\sum_{i=1}^n|R_i|+\sum_{i=1}^n|N_i|\leqslant 2R_p(\theta_{\rho^{-1}(1)},\ldots,\theta_{\rho^{-1}(p)})$$  
\end{proof}

The main result about restrained pairwise loss functions is the following lemma:

\begin{lemma}\label{PosExpectAdditive}
Let $X_1$ and $X_2$ be independent random variables with means $\mu_1$ and
$\mu_2$, and variances ${\sigma_1}^2$ and ${\sigma_2}^2$
respectively. Let $l$ be a restrained pairwise loss function.
Then
\begin{enumerate}[(i)]
\item if $\mu_1>\mu_2$, ${\mathbb E}(l(X_1,X_2))\leqslant
  D\left(\frac{{\sigma_1}^2+{\sigma_2}^2}{(\mu_1-\mu_2)^2}\land
  1\right)+3\lambda\frac{{\sigma_1}^2+{\sigma_2}^2}{\mu_1-\mu_2}$

\item ${\mathbb E}(l(X_1,X_2))\leqslant D+4\lambda(\mu_2-\mu_1)_++(2+2\sqrt{2})\lambda\sqrt{{\sigma_1}^2+{\sigma_2}^2}$

\item If
  $\mu_1-\mu_2>2\sqrt{\frac{2\lambda({\sigma_1}^2+{\sigma_2}^2)}{b}}$,
  then ${\mathbb
  E}(l(X_2,X_1))\geqslant
\frac{1}{2}\left(\frac{b^3}{(b+\lambda)^2}\right)\left((\mu_1-\mu_2)\land
a\right)$
  \end{enumerate}
\end{lemma}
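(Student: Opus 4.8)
The plan is to obtain all three bounds from the scalar random variable got by differencing $X_1$ and $X_2$ — which has variance $\sigma_1^2+\sigma_2^2$ — together with Chebyshev's inequality and the four defining features of a restrained loss; recall also that $l(x,y)=0$ whenever $x\geqslant y$.

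For (i) and (ii) I would start from the pointwise estimate $l(X_1,X_2)\leqslant\big(D+\lambda(X_2-X_1)\big)\mathbf{1}_{\{X_1<X_2\}}\leqslant D\,\mathbf{1}_{\{X_2>X_1\}}+\lambda(X_2-X_1)_+$, which uses that $l$ vanishes off $\{x<y\}$ and that $l(x,x+\epsilon)<D+\lambda\epsilon$. Writing $Y=X_2-X_1$ and taking expectations gives $\mathbb{E}(l(X_1,X_2))\leqslant D\,P(Y>0)+\lambda\,\mathbb{E}(Y_+)$. For (i), Chebyshev bounds $P(Y>0)$ by $\min\{1,(\sigma_1^2+\sigma_2^2)(\mu_1-\mu_2)^{-2}\}$, while integrating the Chebyshev tail gives $\mathbb{E}(Y_+)=\int_0^\infty P(Y>u)\,du\leqslant\int_0^\infty(\sigma_1^2+\sigma_2^2)(\mu_1-\mu_2+u)^{-2}\,du=(\sigma_1^2+\sigma_2^2)/(\mu_1-\mu_2)$; these combine to the stated inequality, the coefficient $3\lambda$ leaving slack for any coarser estimate. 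For (ii), where no sign information on $\mu_1-\mu_2$ is available, I would instead use $(X_2-X_1)_+\leqslant(\mu_2-\mu_1)_++|Y-\mathbb{E}Y|$ with $\mathbb{E}|Y-\mathbb{E}Y|\leqslant\sqrt{\sigma_1^2+\sigma_2^2}$ and $P(Y>0)\leqslant1$; alternatively one can split on whether $\mu_1-\mu_2$ exceeds $\sqrt{\sigma_1^2+\sigma_2^2}$ and fall back on (i) when the means are well separated. Either route gives a bound of the required shape, with generous constants.

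Part (iii) is the substance of the lemma, and the only place the lower-slope condition is used. Fix a threshold $\delta\in(0,a)$. On the event $\{X_1-X_2\geqslant\delta\}$, monotonicity of $l$ in its second argument gives $l(X_2,X_1)\geqslant l(X_2,X_2+\delta)$, and the lower-slope condition gives $l(X_2,X_2+\delta)>b\delta$, so $\mathbb{E}(l(X_2,X_1))\geqslant b\delta\,P(X_1-X_2\geqslant\delta)$. I would then set $\delta=\frac{b}{b+\lambda}\big((\mu_1-\mu_2)\wedge a\big)$, so that $\delta<a$ and $\mu_1-\mu_2-\delta\geqslant\frac{\lambda}{b+\lambda}(\mu_1-\mu_2)$, and apply Chebyshev to bound $P(X_1-X_2<\delta)$ in terms of $(\sigma_1^2+\sigma_2^2)(\mu_1-\mu_2)^{-2}$; the hypothesis $\mu_1-\mu_2>2\sqrt{2\lambda(\sigma_1^2+\sigma_2^2)/b}$ is exactly what forces this tail below $\frac12$. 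That gives $\mathbb{E}(l(X_2,X_1))\geqslant\frac12 b\delta=\frac{b^2}{2(b+\lambda)}\big((\mu_1-\mu_2)\wedge a\big)\geqslant\frac{b^3}{2(b+\lambda)^2}\big((\mu_1-\mu_2)\wedge a\big)$, the final step merely weakening the constant to the stated form. I expect the main obstacle to be exactly this balancing act: picking $\delta$ so that the lower-slope gain $b\delta$ and the Chebyshev tail trade off into a clean constant, checking that the somewhat unusual hypothesis is simultaneously tight enough in both regimes $\mu_1-\mu_2\leqslant a$ and $\mu_1-\mu_2>a$, and tracking the relative sizes of $b$ and $\lambda$ (the computation wants $b\leqslant\lambda$, which is what comparing $b(y-x)<l(x,y)<D+\lambda(y-x)$ for $y-x$ near $a$ gives up to a negligible correction). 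By comparison, the two upper bounds are essentially bookkeeping once the pointwise estimate is written down.
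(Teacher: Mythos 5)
Your argument is correct and reaches all three bounds by a somewhat cleaner route than the paper. For (i) and (ii) the paper partitions the $(x,y)$-plane into regions on which $l(x,y)$ is compared to $l(\mu_2,\mu_1)$ via the Lipschitz property, and then bounds three separate positive-part expectations ${\mathbb E}((X_2-\mu_1)_+)$, ${\mathbb E}((X_2-X_1)_+)$ and ${\mathbb E}((\mu_2-X_1)_+)$ by Chebyshev; that is where its coefficients $3\lambda$ and $(2+2\sqrt{2})\lambda$ come from. Your single pointwise domination $l(X_1,X_2)\leqslant D\,\mathbf{1}_{\{X_1<X_2\}}+\lambda(X_2-X_1)_+$ collapses this to one application of Chebyshev to the difference $Y=X_2-X_1$ and yields (i) and (ii) with smaller constants, which of course implies the stated inequalities. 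For (iii) the paper restricts to the rectangle $\{X_1>\mu_1-d\}\cap\{X_2<\mu_2+d\}$ with $d=\sqrt{b({\sigma_1}^2+{\sigma_2}^2)/(2\lambda)}$, lower-bounds $l$ there by $\tfrac{1}{2}l(\mu_2,\mu_1)$ using the Lipschitz property, and applies the one-sided Chebyshev inequality to each variable separately to obtain the factor $\left(b/(b+\lambda)\right)^2$; your one-dimensional thresholding on $\{X_1-X_2\geqslant\delta\}$ with $\delta=\tfrac{b}{b+\lambda}\left((\mu_1-\mu_2)\wedge a\right)$ avoids the Lipschitz step entirely, uses Chebyshev once, and gives the slightly stronger constant $b^2/(2(b+\lambda))$.

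One caveat on (iii): your parenthetical claim that $b\leqslant\lambda$ follows from comparing $b(y-x)<l(x,y)<D+\lambda(y-x)$ is not quite right. Because of the additive $D$ (equivalently, because $l$ may jump at the diagonal), the definition only forces $b<\lambda+D/a$, so $b>\lambda$ is genuinely possible. Your Chebyshev step requires the tail bound $b(b+\lambda)^2/(8\lambda^3)\leqslant\tfrac{1}{2}$, i.e.\ essentially $b\leqslant\lambda$. This is a normalisation issue rather than a real gap --- one can always replace $b$ by $b\wedge\lambda$, at the cost of the constant in the conclusion --- and the paper's own proof carries the same implicit restriction (its rectangle is nonempty only when $\mu_1-\mu_2>2d$, which forces $b\leqslant 2\lambda$), but if you want the statement for an arbitrary admissible $b$ you should make that reduction explicit rather than assert it follows from the definition.
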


\begin{proof}
  (i)
We divide the plane into several regions:

\hfil\includegraphics{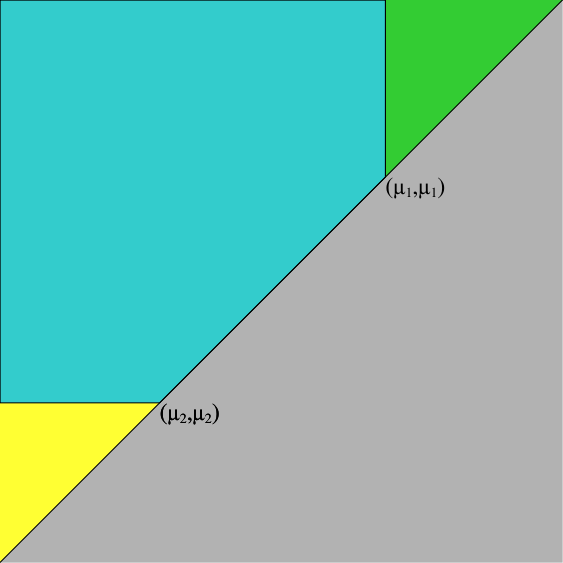}%
%\hfil\hfil\includegraphics{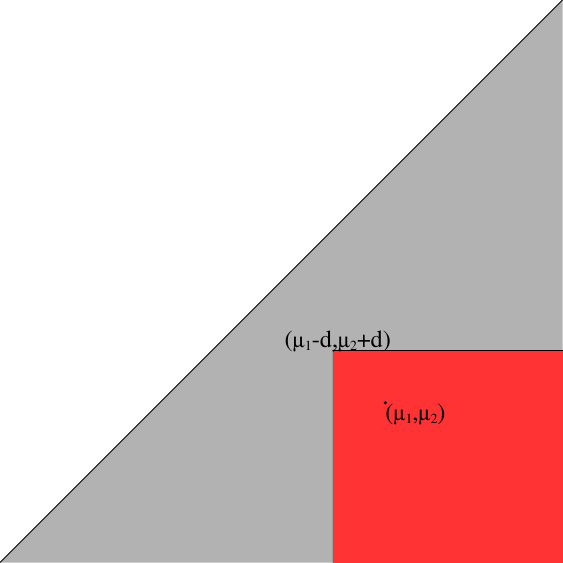}

In the cyan region, we have
$l(x,y)<l(\mu_2,\mu_1)+\lambda(y-x)$; in the yellow region, we have
$l(x,y)\leqslant l(x,\mu_1)\leqslant l(\mu_2,\mu_1)+\lambda(\mu_2-x)$; in the green region, we
have $l(x,y)\leqslant l(\mu_2,y)\leqslant l(\mu_2,\mu_1)+\lambda(y-\mu_1)$.
It therefore follows that
$${\mathbb E}(l(X_1,X_2))\leqslant
l(\mu_2,\mu_1)P(X_1<X_2)+\lambda\Big({\mathbb E}((X_2-\mu_1)_+)+{\mathbb E}((X_2-X_1)_+)+{\mathbb E}((\mu_2-X_1)_+)\Big)$$
For a random variable
$A$ with finite mean $\mu<0$ and finite variance $\sigma^2$, Chebyshev's inequality gives 
$${\mathbb E}(A_+)=\int_0^\infty P(A>t)\,dt\leqslant
\int_0^\infty \frac{\sigma^2}{(t-\mu)^2}\,dt=\int_{-\mu}^\infty \frac{\sigma^2}{u^2}\,du=-\frac{\sigma^2}{\mu}$$
Thus 
$${\mathbb
  E}((X_2-\mu_1)_+)+{\mathbb E}((X_2-X_1)_+)+{\mathbb
  E}((\mu_2-X_1)_+)\leqslant\frac{{\sigma_2}^2}{(\mu_1-\mu_2)}+\frac{{\sigma_1}^2+{\sigma_2}^2}{(\mu_1-\mu_2)}+\frac{{\sigma_1}^2}{(\mu_1-\mu_2)}=2\frac{{\sigma_1}^2+{\sigma_2}^2}{\mu_1-\mu_2}$$
Chebyshev's inequality also gives us that
$$P(X_1<X_2)=P(X_1-X_2<0)\leqslant \frac{{\sigma_1}^2+{\sigma_2}^2}{(\mu_1-\mu_2)^2}$$
so
$${\mathbb E}(l(X_1,X_2))\leqslant
l(\mu_2,\mu_1)\left(\frac{{\sigma_1}^2+{\sigma_2}^2}{(\mu_1-\mu_2)^2}\land
1\right)+2\lambda\frac{{\sigma_1}^2+{\sigma_2}^2}{\mu_1-\mu_2}$$
Using the fact that $l(\mu_2,\mu_1)<D+\lambda(\mu_1-\mu_2)$, we therefore have
$${\mathbb E}(l(X_1,X_2))\leqslant
D\left(\frac{{\sigma_1}^2+{\sigma_2}^2}{(\mu_1-\mu_2)^2}\land 1\right)+3\lambda\frac{{\sigma_1}^2+{\sigma_2}^2}{\mu_1-\mu_2}$$

(ii)
For a random variable
$A$ with finite mean $\mu$ and finite variance $\sigma^2$,
we have for any $t>0$ with $t>\mu$,
$${\mathbb E}(A_+)\leqslant {\mathbb E}((A-t)_+)+t\leqslant t+\frac{\sigma^2}{t-\mu}$$
It is easy to check that this upper bound is minimised by
$t=\mu+\sigma$, which gives 
$${\mathbb E}(A_+)\leqslant \mu+2\sigma$$

Similarly to (i), if $\mu_2\geqslant \mu_1$, we have
\begin{align*}
{\mathbb E}(l(X_1,X_2))&\leqslant
l(\mu_2,\mu_1)P(X_1<X_2)+\lambda\Big({\mathbb
  E}((X_2-\mu_1)_+)+{\mathbb E}((X_2-X_1)_+)+{\mathbb
  E}((\mu_2-X_1)_+)\Big)\\
&\leqslant
l(\mu_2,\mu_1)+\lambda\Big(3(\mu_2-\mu_1)_++2\left(\sigma_1+\sigma_2+\sqrt{{\sigma_1}^2+{\sigma_2}^2}\right)\Big)\\
&\leqslant D+4\lambda(\mu_2-\mu_1)_++(2+2\sqrt{2})\lambda\sqrt{{\sigma_1}^2+{\sigma_2}^2}
\end{align*}

Since $l(x,y)$ is increasing in $y$, if $\mu_2<\mu_1$, we have for any
$\epsilon>0$, 
$${\mathbb E}(l(X_1,X_2))\leqslant {\mathbb
  E}(l(X_1,X_2+\mu_1-\mu_2+\epsilon))\leqslant
D+4\lambda\epsilon+(2+2\sqrt{2})\lambda\sqrt{{\sigma_1}^2+{\sigma_2}^2}$$
Taking the limit as $\epsilon\rightarrow 0$ completes the proof for $\mu_2<\mu_1$.

(iii) Now we consider the red area in the following diagram:

\hfil\includegraphics{RegionDiagram2.pdf}

\noindent where
$d=\sqrt{\frac{b({\sigma_1}^2+{\sigma_2}^2)}{2\lambda}}$. This means
that $l(\mu_2,\mu_1)>b(\mu_1-\mu_2)>2b\sqrt{\frac{2\lambda\left({\sigma_1}^2+{\sigma_2}^2\right)}{b}}=2\sqrt{2\lambda
  b\left({\sigma_1}^2+{\sigma_2}^2\right)}=4\lambda d$.
In the red region, we have $l(y,x)>l(\mu_2+d,\mu_1-d)>l(\mu_2,\mu_1)-2\lambda
d>\frac{l(\mu_2,\mu_1)}{2}$, so
$${\mathbb E}(l(X_2,X_1))\geqslant
\frac{1}{2}l(\mu_2,\mu_1)P(\mu_1-d<X_1)P(\mu_2+d>X_2)$$

The one-sided Chebyshev inequality gives us
\begin{align*}
P(\mu_1-d<X_1)&\geqslant \frac{d^2}{{\sigma_1}^2+d^2}\\
P(\mu_2+d>X_2)&\geqslant \frac{d^2}{{\sigma_2}^2+d^2}\\
P(\mu_1-d<X_1)P(\mu_2+d>X_2)&\geqslant
\left(\frac{d^2}{{\sigma_1}^2+d^2}\right)\left(\frac{d^2}{{\sigma_2}^2+d^2}\right)\\
&=
\frac{\left(\frac{b({\sigma_1}^2+{\sigma_2}^2)}{2\lambda}\right)^2}{\left(\frac{b({\sigma_1}^2+{\sigma_2}^2)}{2\lambda}+{\sigma_1}^2\right)\left(\frac{b({\sigma_1}^2+{\sigma_2}^2)}{2\lambda}+{\sigma_2}^2\right)}\\
&=\frac{b^2}{b^2+2\lambda b+4\lambda^2\frac{{\sigma_1}^2{\sigma_2}^2}{\left({\sigma_1}^2+{\sigma_2}^2\right)^2}}\\
&\geqslant\left(\frac{b}{b+\lambda}\right)^2
\end{align*}
 We have therefore shown that ${\mathbb
  E}(l(X_2,X_1))\geqslant
\frac{1}{2}\left(\frac{b}{b+\lambda}\right)^2l(\mu_2,\mu_1)$.
\end{proof}

Combining this with Lemma~\ref{PostMeanVar} gives

\begin{proposition}\label{PropPairwiseBounds}
  Let $X_1<X_2$ have means $\Theta_1$ and $\Theta_2$ with
  continuous quasiunimodal tail-dominating prior. Let the conditional
  variances be $\sigma_1$ and $\sigma_2$.
  Let $l$ be a restrained binary loss function.
  Then there are
constants $c$, $d$ such that:
\begin{enumerate}[(i)]
\item If $X_2-X_1>2c(|X_1|+|X_j|+2)\left(\sqrt{{\sigma_i}^2+{\sigma_j}^2}\right)$,
  and $c\sqrt{{\sigma_i}^2+{\sigma_j}^2}<1$,
we have that 
$${\mathbb E}(l(\Theta_2,\Theta_1)|X_1,X_2)\leqslant 8c^2D\lambda\left({\sigma_1}^2+{\sigma_2}^2\right)(|X_1|+|X_2|+d)^2\left(\frac{2}{(X_2-X_1)^2}+\frac{3\lambda}{2(X_2-X_1)}\right)$$

\item If $X_2-X_1<2c(|X_1|+|X_j|+2)\left(\sqrt{{\sigma_i}^2+{\sigma_j}^2}\right)$, 
  we have that
$${\mathbb E}(l(\Theta_2,\Theta_1)|X_1,X_2)\leqslant
D+12\lambda c\left(\sqrt{{\sigma_1}^2+{\sigma_2}^2}\right)\left(|X_1|+|X_2|+2+\frac{1}{c}\right)$$

\item If $X_1>X_2+4c(|X_1|+|X_j|+2)\left(\sqrt{{\sigma_i}^2+{\sigma_j}^2}\right)$ we have that 
$${\mathbb E}(l(\Theta_2,\Theta_1)|X_1,X_2)\geqslant
  \frac{1}{2}\left(\frac{b^3}{(b+\lambda)^2}\right)\left(X_1-X_2-c(|X_1|+|X_2|+2)\sqrt{{\sigma_1}^2+{\sigma_2}^2}\right)\land \frac{a}{2}\left(\frac{b^3}{(b+\lambda)^2}\right)$$
  \end{enumerate}

\end{proposition}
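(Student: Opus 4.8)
The plan is to combine Lemma~\ref{PostMeanVar}, which pins the posterior mean and variance of each $\Theta_i$ near $X_i$ and $\sigma_i^2$, with Lemma~\ref{PosExpectAdditive}, which bounds $\mathbb{E}(l(\cdot,\cdot))$ for a pair of independent variables in terms of their means and variances. Fix $X_1,X_2$ and write $\mu_i$, $s_i^2$ for the posterior mean and variance of $\Theta_i$ given $X_i$; since the $\Theta_i$ are a priori independent and each $X_i$ depends only on $\Theta_i$ (and $\sigma_i$), the posteriors $\Theta_1\mid X_1$ and $\Theta_2\mid X_2$ are independent, so Lemma~\ref{PosExpectAdditive} applies to them. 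First I would record Lemma~\ref{PostMeanVar} in a symmetric form: its proof in fact establishes $s^2+(\mu-x)^2<c^2\sigma^2(x+1)^2$, and rerunning the same estimate after reflecting when $x<0$ produces a constant $c_0$, depending only on the prior and on the constant $K$ of Assumption~\ref{RegularityConditionFormal}, with $|\mu_i-X_i|<c_0\sigma_i(|X_i|+1)$ and $s_i^2<c_0^2\sigma_i^2(|X_i|+1)^2$ whenever $c_0\sigma_i<1$. Using $\sigma_i\leqslant\sqrt{\sigma_1^2+\sigma_2^2}$ and $|X_i|\leqslant|X_1|+|X_2|$ this yields $s_1^2+s_2^2<c_0^2(\sigma_1^2+\sigma_2^2)(|X_1|+|X_2|+1)^2$ and
$$\bigl|(\mu_2-\mu_1)-(X_2-X_1)\bigr|\;<\;2\sqrt{2}\,c_0(|X_1|+|X_2|+1)\sqrt{\sigma_1^2+\sigma_2^2}.$$

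Each part of the proposition then follows by applying the matching case of Lemma~\ref{PosExpectAdditive} to the posteriors, with the observed-value hypotheses converted to posterior-mean and posterior-variance hypotheses via the two bounds above. For (i), I would pick $c$ large relative to $c_0$ so that $X_2-X_1>2c(|X_1|+|X_2|+2)\sqrt{\sigma_1^2+\sigma_2^2}$ forces $\mu_2-\mu_1\geqslant\tfrac12(X_2-X_1)>0$; Lemma~\ref{PosExpectAdditive}(i), applied with $\Theta_2\mid X_2$ as first argument of $l$ and $\Theta_1\mid X_1$ as second, then bounds $\mathbb{E}(l(\Theta_2,\Theta_1)\mid X_1,X_2)$ by $D\bigl(\tfrac{s_1^2+s_2^2}{(\mu_2-\mu_1)^2}\land1\bigr)+3\lambda\tfrac{s_1^2+s_2^2}{\mu_2-\mu_1}$, and dropping the $\land1$ and substituting the bounds on $s_1^2+s_2^2$ and $\mu_2-\mu_1$ gives the displayed form. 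For (ii), I would use the unconditional estimate Lemma~\ref{PosExpectAdditive}(ii), noting that $(\mu_1-\mu_2)_+\leqslant(X_1-X_2)_+ +|\mu_1-X_1|+|\mu_2-X_2|$ and $\sqrt{s_1^2+s_2^2}$ are each bounded by a multiple of $c(|X_1|+|X_2|+2)\sqrt{\sigma_1^2+\sigma_2^2}$ by the case hypothesis together with the two bounds.

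For the lower bound (iii), I would choose $c$ large enough that $X_1-X_2>4c(|X_1|+|X_2|+2)\sqrt{\sigma_1^2+\sigma_2^2}$ forces simultaneously (a)~$\mu_1-\mu_2>2\sqrt{2\lambda(s_1^2+s_2^2)/b}$, using the variance bound, so that the hypothesis of Lemma~\ref{PosExpectAdditive}(iii) is met, and (b)~$\mu_1-\mu_2\geqslant X_1-X_2-c(|X_1|+|X_2|+2)\sqrt{\sigma_1^2+\sigma_2^2}$. Lemma~\ref{PosExpectAdditive}(iii), applied to the pair $(\Theta_1\mid X_1,\Theta_2\mid X_2)$ so that the quantity bounded below is $\mathbb{E}(l(\Theta_2\mid X_2,\Theta_1\mid X_1))=\mathbb{E}(l(\Theta_2,\Theta_1)\mid X_1,X_2)$, then gives a lower bound of $\tfrac12\tfrac{b^3}{(b+\lambda)^2}\bigl((\mu_1-\mu_2)\land a\bigr)$; combining this with (b) (since $u\mapsto u\land a$ is nondecreasing) and distributing $\tfrac12 b^3/(b+\lambda)^2$ over the minimum produces exactly the stated inequality.

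The one part I expect to be genuinely fiddly is checking that a single pair $(c,d)$ makes all three conclusions come out in precisely the displayed form: $c$ must absorb $c_0$, $\sqrt{2\lambda c_0/b}$ and the numerical factors $\tfrac12,\tfrac14$ above, and --- because the target in (i) carries an extra $\lambda$ and a $D$ in its leading term --- must be large enough that $c^2\lambda D$ and $c^2\lambda^2 D$ dominate the $c_0^2 D$ and $\lambda c_0^2 D$ coming out of Lemma~\ref{PosExpectAdditive}(i); while $d$ need only satisfy $d\geqslant1$ so that $(|X_1|+|X_2|+d)^2\geqslant(|X_1|+|X_2|+1)^2$. Since $c,d$ are allowed to depend on $\lambda,b,a,D$ and on the prior, all of these can be arranged at once. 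Beyond this constant-chasing the only subtlety is the symmetrisation of Lemma~\ref{PostMeanVar} recorded in the first paragraph (together with the tacit proviso $c_0\sigma_i<1$, which is implicit in being able to invoke that lemma).
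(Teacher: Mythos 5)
Your proposal is correct and follows essentially the same route as the paper: apply Lemma~\ref{PostMeanVar} to pin the posterior means and variances of $\Theta_1\mid X_1$ and $\Theta_2\mid X_2$, then feed these into the corresponding parts of Lemma~\ref{PosExpectAdditive}, choosing $c$ so that in case (i) one gets $\mu_2-\mu_1>\tfrac12(X_2-X_1)$. If anything you are slightly more careful than the paper, which does not explicitly verify the hypothesis $\mu_1-\mu_2>2\sqrt{2\lambda(s_1^2+s_2^2)/b}$ of Lemma~\ref{PosExpectAdditive}(iii) nor the sign symmetrisation of Lemma~\ref{PostMeanVar}.
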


\begin{proof}
(i) 
  By Lemma~\ref{PostMeanVar}, the posterior distribution of
$\Theta_2|X_2$ has mean $\mu_2>X_2-c(|X_2|+1)\sigma_2$ and variance
${s_2}^2<c^2{\sigma_2}^2\left(|X_2|+1\right)^2$, while the posterior distribution of
$\Theta_1|X_1$ has mean $\mu_1<X_1+c(|X_1|+1)\sigma_1$ and variance
${s_1}^2<c^2{\sigma_1}^2\left(|X_1|+1\right)^2$. Lemma~\ref{PosExpectAdditive}(i)
 gives us that
$$  {\mathbb E}(l(\Theta_2,\Theta_1)|X_1,X_2)\leqslant
 D\left(\frac{{s_1}^2+{s_2}^2}{(\mu_2-\mu_1)^2}\land
 1\right)+3\lambda\frac{{s_1}^2+{s_2}^2}{\mu_2-\mu_1}$$
Since $X_2-X_1>2c(|X_1|+|X_2|+2)\sqrt{{\sigma_i}^2+{\sigma_j}^2}$, we
have $$c\sigma_2(|X_2|+1)+c\sigma_1(|X_1|+1)<c\sqrt{{\sigma_i}^2+{\sigma_j}^2}\left(|X_1|+|X_2|+2\right)<\frac{X_2-X_1}{2}$$
so $\mu_2-\mu_1>\frac{X_2-X_1}{2}$. We therefore get
$${\mathbb E}(l(\Theta_2,\Theta_1)|X_1,X_2)\leqslant 4c^2D\left({\sigma_1}^2+{\sigma_2}^2\right)(|X_1|+|X_2|+2)^2\left(\frac{1}{(X_2-X_1)^2}+\frac{3\lambda}{2(X_2-X_1)}\right)$$

(ii) In this case, 
Lemma~\ref{PosExpectAdditive}(ii) gives us that
$${\mathbb E}(l(\Theta_1,\Theta_2)|X_1,X_2)\leqslant D+4\lambda(\mu_2-\mu_1)_++(2+2\sqrt{2})\lambda\sqrt{{s_1}^2+{s_2}^2}$$
By Lemma~\ref{PostMeanVar}, we have that 
$$(\mu_2-\mu_1)_+\leqslant 3c\sqrt{{\sigma_1}^2+{\sigma_2}^2}(|X_1|+|X_2|+2)$$
so
$${\mathbb E}(l(\Theta_1,\Theta_2)|X_1,X_2)\leqslant D+12\lambda
c\sqrt{{\sigma_1}^2+{\sigma_2}^2}(|X_1|+|X_2|+2)+(2+2\sqrt{2})\lambda\sqrt{{\sigma_1}^2+{\sigma_2}^2}\leqslant
D+12\lambda c\sqrt{{\sigma_1}^2+{\sigma_2}^2}\left(|X_1|+|X_2|+2+\frac{1}{c}\right)$$

(iii) As in (i), we have that 
  by Lemma~\ref{PostMeanVar}, the posterior distribution of
$\Theta_2|X_2$ has mean $\mu_2>X_2-c(|X_2|+1)\sigma_2$ and variance
${s_2}^2<c^2{\sigma_2}^2\left(|X_2|+1\right)^2$, while the posterior distribution of
$\Theta_1|X_1$ has mean $\mu_1>X_1-c(|X_1|+1)\sigma_1$ and variance
${s_1}^2<c^2{\sigma_1}^2\left(|X_1|+1\right)^2$. Lemma~\ref{PosExpectAdditive}(iii)
  then gives us that
  \begin{align*}
  {\mathbb E}(l(\Theta_1,\Theta_2)|X_1,X_2)&\geqslant
  \frac{1}{2}\left(\frac{b^3}{(b+\lambda)^2}\right)\left((X_1-c(|X_1|+1)\sigma_1-(X_2+c(|X_2|+1)\sigma_2))\land
  a\right)\\&
  \geqslant \frac{1}{2}\left(\frac{b^3}{(b+\lambda)^2}\right)\left((X_1-X_2-c(|X_1|+|X_2|+2)\sqrt{{\sigma_1}^2+{\sigma_2}^2})\land a\right)
\end{align*}
  
\end{proof}

\section{Main Theorem}\label{Proof}

\subsection{Consistency Framework}

Before we can state our main theorem, we need to clarify what
consistency means in the context of the ranking problem.
We define consistency in terms of a loss function
$L_p$. We will say that the ranking method which estimates the
permutation $\hat{\rho}_p$ at stage $p$ is consistent if ${\mathbb
  E}(L_p({\hat{\rho}_p}^{-1}(1),{\hat{\rho}_p}^{-1}(2),\ldots,{\hat{\rho}_p}^{-1}(p))\rightarrow 0$ as
$p\rightarrow 0$. We will allow $L_p$ to be different
from the loss function used by the ranking method. This allows a
certain amount of misspecification. In the case where $L_p$ is an
additive loss function, this is where the scale function $s(p)$ is
important. There are three natural choices of scale function,
depending on the strength of convergence we need: total misranking
loss, where $s(p)=1$; per-unit misranking loss, where $s(p)=p^{-1}$;
and per-pair misranking loss, where $s(p)=p^{-2}$. Most loss funcions
in the literature are described in the total misranking loss form, but
because those papers only describe the method, rather than considering
the consistency, the choice of form to describe is arbitrary. For a
regular loss function with a discontinuity at $\theta_i=\theta_j$: a method is
total misranking loss convergent if asymptotically, the probability of
the correct ranking converges to 1; it is per-unit consistent if for
almost all units, the probability of them being misranked relative to
a randomly chosen unit converges to 0; it is per-pair consistent if
the probability of a pair of randomly chosen units being misranked
converges to 0.

\begin{lemma}\label{ConsistencyConditions}
A ranking method $\hat{\rho}_p$ is consistent with respect to a regular loss
function $L_p$ with scaling factor $s(p)$ if the following
two conditions hold for the set of misranked pairs ${\mathcal M}_p=\{(i,j)|\Theta_i<\Theta_j,\hat{\rho}_p(i)<\hat{\rho}_p(j)\}$:

\begin{itemize}

  \item
    $s(p){\mathbb E}\left(|{\mathcal M}_p|\right)\rightarrow 0$

\item $s(p)\sum_{i,j=1}^p(\Theta_j-\Theta_i)P\left((i,j)\in {\mathcal
  M}_p\right)\rightarrow 0$
    
\end{itemize}
\end{lemma}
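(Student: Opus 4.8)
The plan is to reduce consistency for an arbitrary regular loss function to the two stated conditions by first exploiting the definition of ``regular'': $L_p$ is equivalent to an additive loss function $A_p(\theta_1,\ldots,\theta_p)=s(p)\sum_{i<j}l(\theta_i,\theta_j)$ generated by a restrained pairwise loss $l$. Since equivalence gives constants $0<c<u$ with $c A_p\leqslant L_p\leqslant u A_p$ pointwise, it suffices to prove $\mathbb{E}\big(A_p(\hat\rho_p^{-1}(1),\ldots,\hat\rho_p^{-1}(p))\big)\to 0$; the factor $u$ is harmless. So from the outset I would replace $L_p$ by the additive loss $A_p$ and work only with the restrained pairwise loss $l$ and its constants $\lambda$ (Lipschitz), $D$ (the additive-offset bound $l(x,x+\epsilon)<D+\lambda\epsilon$), and $a,b$ (the lower bound $l(x,y)>b(y-x)$ for $0<y-x<a$).

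Next I would write the expected additive loss as a sum over ordered pairs. Because $l(\theta_i,\theta_j)=0$ unless $\theta_i<\theta_j$, the only pairs contributing to $A_p$ evaluated at the estimated ranking are exactly the misranked pairs $\mathcal{M}_p=\{(i,j):\Theta_i<\Theta_j,\ \hat\rho_p(i)<\hat\rho_p(j)\}$. Hence
\begin{equation*}
\mathbb{E}\big(A_p\big)=s(p)\,\mathbb{E}\!\left(\sum_{(i,j)\in\mathcal{M}_p} l(\Theta_i,\Theta_j)\right)
= s(p)\sum_{i,j=1}^p \mathbb{E}\big(l(\Theta_i,\Theta_j)\,\mathbf{1}[(i,j)\in\mathcal{M}_p]\big).
\end{equation*}
On the event $(i,j)\in\mathcal{M}_p$ we have $0<\Theta_j-\Theta_i$, so the restrained bound $l(\Theta_i,\Theta_j)<D+\lambda(\Theta_j-\Theta_i)$ applies, giving the split
\begin{equation*}
\mathbb{E}\big(A_p\big)\leqslant D\, s(p)\,\mathbb{E}\big(|\mathcal{M}_p|\big)+\lambda\, s(p)\sum_{i,j=1}^p(\Theta_j-\Theta_i)\,P\big((i,j)\in\mathcal{M}_p\big),
\end{equation*}
where in the second term I used $\mathbb{E}\big((\Theta_j-\Theta_i)\mathbf{1}[(i,j)\in\mathcal{M}_p]\big)=(\Theta_j-\Theta_i)P((i,j)\in\mathcal{M}_p)$ since the $\Theta$'s are the fixed true values and only $\hat\rho_p$ (through the data $X$) is random — this is exactly why the second displayed condition in the lemma is phrased with $\Theta_j-\Theta_i$ pulled outside the probability. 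The first condition forces the first term to $0$ and the second condition forces the second term to $0$, so $\mathbb{E}(A_p)\to 0$, hence $\mathbb{E}(L_p)\to 0$.

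The only genuinely delicate point — and the step I would treat most carefully — is whether the contributing pairs really are constrained to have $\Theta_i<\Theta_j$, i.e. that the integrand $l(\Theta_i,\Theta_j)\mathbf{1}[(i,j)\in\mathcal{M}_p]$ is nonzero only when $\Theta_i<\Theta_j$: this is immediate from the definition of $\mathcal{M}_p$ together with the fact that $l$ vanishes on $\{x\geqslant y\}$, and the continuity of the true prior makes ties $\Theta_i=\Theta_j$ a null event, so no boundary case intervenes. A secondary subtlety is handling the scaling: the scale function $s(p)$ is absorbed identically into both the hypotheses and the conclusion, and the equivalence constant $u$ is a fixed finite multiplier, so nothing in the argument needs $s(p)$ to take any particular form among $1$, $p^{-1}$, or $p^{-2}$ — the lemma is uniform in the choice of $s$. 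I would state the argument once for the additive representative and remark that equivalence transfers it verbatim.
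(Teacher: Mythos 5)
Your proposal is correct and follows essentially the same route as the paper's proof: reduce to the additive representative via the equivalence constant $u$, observe that only pairs in ${\mathcal M}_p$ contribute since $l$ vanishes on $\{x\geqslant y\}$, and bound $l(\Theta_i,\Theta_j)$ by $D+\lambda(\Theta_j-\Theta_i)$ to split the expected loss into the two quantities controlled by the hypotheses. Your version merely writes out the final split explicitly where the paper leaves it implicit.
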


\begin{proof}
Suppose the conditions hold. The expected loss is ${\mathbb
  E}\left(L_p\left(\Theta_{{\hat{\rho}_p}^{-1}(1)},\ldots,\Theta_{{\hat{\rho}_p}^{-1}(p)}\right)\right)$. Since
$L_p$ is regular, it is equivalent to an additive loss
generated by a Lipschitz pairwise loss function $l(x,y)$.  By
the equivalence, we have that
$
L_p\left(\Theta_{{\hat{\rho}_p}^{-1}(1)},\ldots,\Theta_{{\hat{\rho}_p}^{-1}(p)}\right)\leqslant
us(p)\sum_{\hat{\rho}_p(i)<\hat{\rho}_p(j)}l(\Theta_{i},\Theta_{j})$. By definition, if
$\hat{\rho}_p(i)<\hat{\rho}_p(j)$, but $(i,j)\not\in{\mathcal M}$, we have
$l(\Theta_i,\Theta_j)=0$, so we have 
$\sum_{\hat{\rho}_p(i)<\hat{\rho}_p(j)}l(\Theta_i,\Theta_j)=\sum_{(i,j)\in{\mathcal
    M}}l(\Theta_i,\Theta_j)$. Therefore, it is sufficient to prove
that $l(\Theta_i,\Theta_j)<a(\Theta_j-\Theta_i)+b$ for some constants
$a$ and $b$. This follows from the definition of a restrained pairwise
loss, with $a=\lambda$ and $b=D$.
\end{proof}

\subsection{Main Theorem}

We will prove consistency by showing that asymptotically, all standard
ranking methods agree with value ranking for the problems we are
considering. Consistency will then follow from the consistency of
value ranking.

\begin{lemma}\label{ValRankConsist}
If the $\Theta_i$ are drawn i.i.d. from a fixed continuous
distribution with bounded density and finite mean and variance, and
$ps(p)\sum_{i=1}^p \sigma_{ip}\rightarrow 0$, then value ranking is
consistent.
\end{lemma}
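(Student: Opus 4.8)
The plan is to verify the two conditions of Lemma~\ref{ConsistencyConditions} for the value-ranking estimator $\hat\rho_p$, which ranks $u_i$ before $u_j$ precisely when $X_{ip}>X_{jp}$. For value ranking, a pair $(i,j)$ with $\Theta_i<\Theta_j$ is misranked exactly when $X_{ip}>X_{jp}$, i.e. when $X_{ip}-X_{jp}>0$ despite $\Theta_i-\Theta_j<0$. The key elementary observation is that $X_{ip}-X_{jp}$ has mean $\Theta_i-\Theta_j<0$ and variance $\sigma_{ip}^2+\sigma_{jp}^2$, so Chebyshev's inequality gives
$$P\big((i,j)\in{\mathcal M}_p\,\big|\,\Theta,\sigma\big)\leqslant\frac{\sigma_{ip}^2+\sigma_{jp}^2}{(\Theta_j-\Theta_i)^2}\land 1.$$
This is the same bound used in Lemma~\ref{PosExpectAdditive}(i), and it is the workhorse for both conditions.

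For the first condition, $s(p){\mathbb E}(|{\mathcal M}_p|)=s(p)\sum_{i<j}{\mathbb E}\big(P((i,j)\in{\mathcal M}_p\mid\Theta,\sigma)\big)$. I would condition on the $\sigma_{ip}$ (which are independent of the $\Theta_i$) and integrate over the $\Theta$'s. Because the $\Theta_i$ are i.i.d.\ with bounded density, for fixed $\Theta_j$ the density of $\Theta_i-\Theta_j$ is bounded by some constant $B$, so ${\mathbb E}\big(\frac{\sigma_{ip}^2+\sigma_{jp}^2}{(\Theta_j-\Theta_i)^2}\land 1\big)\leqslant B\int_{\mathbb R}\big(\frac{\sigma_{ip}^2+\sigma_{jp}^2}{t^2}\land 1\big)\,dt=2B\sqrt{\sigma_{ip}^2+\sigma_{jp}^2}\leqslant 2B(\sigma_{ip}+\sigma_{jp})$. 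Summing over $i<j$ and taking expectations over $\sigma$, we get ${\mathbb E}(|{\mathcal M}_p|)\leqslant 2B\sum_{i<j}{\mathbb E}(\sigma_{ip}+\sigma_{jp})\leqslant 2Bp\sum_{i=1}^p{\mathbb E}(\sigma_{ip})$, so $s(p){\mathbb E}(|{\mathcal M}_p|)\leqslant 2B\cdot ps(p)\sum_i{\mathbb E}(\sigma_{ip})\to 0$ by hypothesis.

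For the second condition, I need $s(p)\sum_{i,j}{\mathbb E}\big((\Theta_j-\Theta_i)_+P((i,j)\in{\mathcal M}_p\mid\Theta,\sigma)\big)\to 0$ (only terms with $\Theta_j>\Theta_i$ contribute). Using the same Chebyshev bound, $(\Theta_j-\Theta_i)\big(\frac{\sigma_{ip}^2+\sigma_{jp}^2}{(\Theta_j-\Theta_i)^2}\land 1\big)\leqslant\frac{\sigma_{ip}^2+\sigma_{jp}^2}{\Theta_j-\Theta_i}\land(\Theta_j-\Theta_i)\leqslant 2\sqrt{\sigma_{ip}^2+\sigma_{jp}^2}$ (the two expressions are equal when $\Theta_j-\Theta_i=\sqrt{\sigma_{ip}^2+\sigma_{jp}^2}$, and each is bounded by that common value). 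Hence the summand is bounded by $2(\sigma_{ip}+\sigma_{jp})$ uniformly in $\Theta$, and exactly as above $s(p)\sum_{i,j}{\mathbb E}(\cdots)\leqslant 2\cdot ps(p)\sum_i{\mathbb E}(\sigma_{ip})\to 0$. With both conditions verified, Lemma~\ref{ConsistencyConditions} gives consistency.

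The main obstacle is making sure the Chebyshev-based pointwise bounds are genuinely uniform over the random $\Theta_i$ so that the integration step is legitimate: the bound $\frac{\sigma^2+\sigma^2}{(\Theta_j-\Theta_i)^2}$ blows up as $\Theta_j-\Theta_i\to 0$, which is why the truncation with $\land 1$ (for condition one) and the $\land(\Theta_j-\Theta_i)$ repackaging into $2\sqrt{\sigma_{ip}^2+\sigma_{jp}^2}$ (for condition two) are essential, and why the bounded-density hypothesis is used — it controls the mass of $\Theta_i-\Theta_j$ near $0$. The finiteness of ${\mathbb E}(\sigma_{ip})$ (which follows from the assumed finite mean of $h_p$) is what lets the final sums be controlled by the single scalar quantity $ps(p)\sum_i\sigma_{ip}$ appearing in the hypothesis. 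Everything else is routine.
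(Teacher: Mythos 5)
Your proof is correct and follows essentially the same route as the paper: a Chebyshev bound $P(X_{ip}>X_{jp}\mid\Theta,\sigma)\leqslant\frac{{\sigma_{ip}}^2+{\sigma_{jp}}^2}{(\Theta_j-\Theta_i)^2}\land 1$ for each pair, integrated against the bounded density of $\Theta_i-\Theta_j$ to yield a per-pair contribution of order $\sqrt{{\sigma_{ip}}^2+{\sigma_{jp}}^2}$, then summed to get $ps(p)\sum_i\sigma_{ip}$. Routing the argument explicitly through the two conditions of Lemma~\ref{ConsistencyConditions} rather than bounding the expected loss directly via $l(\Theta_i,\Theta_j)<D+\lambda(\Theta_j-\Theta_i)_+$ is only a cosmetic repackaging of the same decomposition, and your pointwise bound $(\Theta_j-\Theta_i)\left(\frac{{\sigma_{ip}}^2+{\sigma_{jp}}^2}{(\Theta_j-\Theta_i)^2}\land 1\right)\leqslant 2\sqrt{{\sigma_{ip}}^2+{\sigma_{jp}}^2}$ handles the correlation between the $(\Theta_j-\Theta_i)$ factor and the Chebyshev term at least as cleanly as the paper does.
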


\begin{proof}
The loss function is
$s(p)\sum_{i,j=1}^pl(\Theta_i,\Theta_j)P(X_{ip}<X_{jp})$. By
Chebyshev's inequality, we have
$$P(X_{ip}<X_{jp})<\frac{({\sigma_{ip}}^2+{\sigma_{jp}}^2)}{(\Theta_{j}-\Theta_{i})^2}\land 1$$
Since $l(\Theta_i,\Theta_j)<D+\lambda(\Theta_j-\Theta_i)_+$, the loss
function is
\begin{align*}
s(p)\sum_{i,j=1}^pl(\Theta_i,\Theta_j)P(X_{ip}<X_{jp})&\leqslant
\sum_{\Theta_i<\Theta_j}(D+\lambda(\Theta_j-\Theta_i))\left(\frac{({\sigma_{ip}}^2+{\sigma_{jp}}^2)}{(\Theta_{i}-\Theta_{j})^2}\land
1\right)\\
&=\sum_{\Theta_i<\Theta_j}(D+\lambda(\Theta_j-\Theta_i))\left({\sigma_{ip}}^2+{\sigma_{jp}}^2\right)\left({(\Theta_{i}-\Theta_{j})^{-2}}\land
  \left({\sigma_{ip}}^2+{\sigma_{jp}}^2\right)^{-1}\right)
\end{align*}
Since $\Theta_j$ follows a continuous distribution with density
$\pi(\theta)<C$, for some upper bound $C$, we have that for fixed $i$,
$P\left(|\Theta_j-\Theta_i|^{-2}>t\right)=P\left(|\Theta_j-\Theta_i|<t^{-\frac{1}{2}}\right)\leqslant
t^{-\frac{1}{2}}C$. This means that for any $j$, $${\mathbb
  E}\left((\Theta_{i}-\Theta_{j})^{-2}\land t\right)=\int_0^t
P\left((\Theta_{i}-\Theta_{j})^{-2}>u\right)\,du\leqslant C\int_0^t
u^{-\frac{1}{2}}\,du=2C\sqrt{t}$$
Thus the expected loss is 
$${\mathbb
  E}\left(s(p)\sum_{i,j=1}^pl(\Theta_i,\Theta_j)P(X_{ip}<X_{jp})\right)\leqslant
s(p){\mathbb
  E}\left(\sum_{\Theta_i<\Theta_j}(D+\lambda(\Theta_j-\Theta_i))\sqrt{{\sigma_{ip}}^2+{\sigma_{jp}}^2}\right)\leqslant
ps(p)\sum_{i=1}^p\sigma_{ip}$$
Thus if $ps(p)\sum_{i=1}^p\sigma_{ip}\rightarrow 0$ then value ranking
is consistent.
\end{proof}

\begin{lemma}\label{Inequality}
  For any 4 real numbers $x,y,z,w$, we have
  $$\left|\frac{x-y}{|x|+|y|+2}\right|-\left|\frac{z-w}{|z|+|w|+2}\right|\leqslant\frac{1}{2}\left(|x-z|+|y-w|\right)\left(1+\frac{|x-z|+|y-w|}{2}\right)$$
\end{lemma}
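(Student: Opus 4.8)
The plan is to define $f(x,y) = \frac{x-y}{|x|+|y|+2}$ and show that $f$ is Lipschitz-like with the stated modulus of continuity, by bounding the difference $|f(x,y) - f(z,w)|$ directly. The natural first move is to interpolate: write $|f(x,y)-f(z,w)| \leqslant |f(x,y)-f(z,y)| + |f(z,y)-f(z,w)|$, so that it suffices to control the change in $f$ when moving one coordinate at a time, and then reassemble using the fact that $|x-z| + |y-w|$ dominates each individual step. For each one-variable estimate, I would bound the partial difference by a combination of $\frac{|x-z|}{|x|+|y|+2}$ (from the numerator changing) and $\frac{|x-y|\,\big||x|-|z|\big|}{(|x|+|y|+2)(|z|+|y|+2)}$ (from the denominator changing), using $\big||x|-|z|\big| \leqslant |x-z|$ and $|x-y| \leqslant |x|+|y|+2$ so that the second term is at most $\frac{|x-z|}{|z|+|y|+2}$.

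The key structural observation that makes the quadratic correction term appear is this: the "naive" bound gives something like $|f(x,y)-f(z,w)| \leqslant$ (roughly) $\frac{|x-z|+|y-w|}{\text{min of the two denominators}}$, and since each denominator is at least $2$, the leading term is $\tfrac12(|x-z|+|y-w|)$. But the denominator in the $(x,y)$-term can be much smaller than the one in the $(z,w)$-term only by an amount controlled by $|x-z|+|y-w|$ — precisely, $\big||x|+|y| - |z|-|w|\big| \leqslant |x-z|+|y-w|$, so the ratio of the two denominators is at most $1 + \frac{|x-z|+|y-w|}{2}$. Substituting this into the naive bound converts the $\tfrac12(|x-z|+|y-w|)$ into $\tfrac12(|x-z|+|y-w|)\big(1 + \tfrac{|x-z|+|y-w|}{2}\big)$, which is exactly the claimed right-hand side.

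Carrying this out concretely: first establish the one-coordinate bound $\big|f(x,y) - f(z,y)\big| \leqslant \frac{|x-z|}{|x|+|y|+2} + \frac{|x-z|}{|z|+|y|+2}$ and the symmetric one for the second coordinate, then add. Both denominators that appear, after the interpolation, are among $\{|x|+|y|+2,\ |z|+|y|+2,\ |z|+|w|+2\}$; I would bound each of these below by the smallest, and bound the smallest below by $\tfrac{|x|+|y|+2}{1 + (|x-z|+|y-w|)/2}$ using $\big||x|+|y|+2 - (|z|+|w|+2)\big|\leqslant |x-z|+|y-w|$ and the denominators being $\geqslant 2$. The main obstacle is purely bookkeeping — keeping the right denominator attached to each term through the interpolation so that the final common denominator one replaces everything with is genuinely the correct one and the constant $\tfrac12$ (not something larger) comes out. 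Since $f$ is antisymmetric and the bound is symmetric in $(x,y)\leftrightarrow(z,w)$, one may assume WLOG that $|x|+|y| \leqslant |z|+|w|$, which pins down which denominator is smallest and streamlines the estimates.
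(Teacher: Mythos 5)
There is a genuine gap, and it is not ``purely bookkeeping'': the specific estimates you commit to lose a factor of $2$ on the leading term, and the inequality has no room for that. Your one--coordinate bound is $|f(x,y)-f(z,y)|\leqslant \frac{|x-z|}{|x|+|y|+2}+\frac{|x-z|}{|z|+|y|+2}$, i.e.\ \emph{two} terms each of which, after using ``every denominator is at least $2$'', contributes up to $\frac{|x-z|}{2}$. Adding the symmetric bound for the second coordinate, the best your scheme can deliver is $|x-z|+|y-w|=S$ (and after multiplying by your denominator-ratio factor, $S(1+\tfrac{S}{2})$), whereas the target is $\frac{S}{2}\bigl(1+\frac{S}{2}\bigr)$, which is \emph{smaller} than $S$ whenever $S<2$. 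This matters because the lemma is tight to first order: with $x=\epsilon$, $y=-\epsilon$, $z=w=0$ the left side is $\frac{\epsilon}{1+\epsilon}=\epsilon-O(\epsilon^2)$ while the right side is $\epsilon+\epsilon^2$, so the constant $\frac12$ on the linear term cannot be degraded at all. The step that kills you is precisely the simplification ``$|x-y|\leqslant|x|+|y|+2$, so the denominator-change term is at most $\frac{|x-z|}{|z|+|y|+2}$'': the quantity $\frac{|x-y|}{|x|+|y|+2}$ must be kept as $\leqslant 1-\frac{2}{|x|+|y|+2}$ rather than bounded by $1$, because near the tight configuration it is $O(\epsilon)$, not $O(1)$, and that is exactly where the missing $\frac12$ lives.

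For comparison, the paper never splits coordinates. It bounds the single quantity $|x-y|-\frac{|x|+|y|+2}{|z|+|w|+2}|z-w|$ by $(|x-z|+|y-w|)\bigl(1+\frac{|z-w|}{2}\bigr)$, divides by $|x|+|y|+2$, and then extracts the $\frac12$ from the inequality $1+\frac{|z-w|}{2}\leqslant\frac{|z|+|w|+2}{2}$ --- i.e.\ from the quantitative fact that $|z-w|\leqslant|z|+|w|$, not merely from denominators being $\geqslant 2$ --- before finishing with $\frac{|z|+|w|+2}{|x|+|y|+2}\leqslant 1+\frac{S}{2}$. Your interpolation route can in fact be repaired by carrying the factor $1+\frac{|x-y|}{|x|+|y|+2}$ through and verifying an inequality of the form $8(A-1)\leqslant AB(2+S)$ for the relevant denominators $A,B$, but that verification is the real content of the proof and is absent from your proposal; as written, the argument does not establish the stated constant.
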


\begin{proof}
  \begin{align*}
  \left|\frac{|x|+|y|+2}{|z|+|w|+2}-1\right|&=
  \left|\frac{|x|+|y|+2-\left(|z|+|w|+2\right)}{|z|+|w|+2}\right|\\
  &=\left|\frac{|x|-|z|+|y|-|w|}{|z|+|w|+2}\right|\\
  &\leqslant \frac{|x-z|+|y-w|}{2}\\
\left|x-y\right|-  \frac{\left(|x|+|y|+2\right)}{\left(|z|+|w|+2\right)}\left|
z-w\right|&\leqslant|x-y|-|z-w|+|z-w|-
\frac{\left(|x|+|y|+2\right)}{\left(|z|+|w|+2\right)}\left|z-w\right|\\
&\leqslant|x-z|+|y-w|+|z-w|\left|1-  \frac{\left(|x|+|y|+2\right)}{\left(|z|+|w|+2\right)}\right|\\
&\leqslant\left(|x-z|+|y-w|\right)\left(1+\frac{|z-w|}{2}\right)\\
  \left|
\frac{x-y}{|x|+|y|+2}\right|-\left|
\frac{z-w}{|z|+|w|+2}\right|&\leqslant\left(|x-z|+|y-w|\right)\left(\frac{1+\frac{|z-w|}{2}}{|x|+|y|+2}\right)\\
&\leqslant\frac{1}{2}\left(|x-z|+|y-w|\right)\left(\frac{|z|+|w|+2}{|x|+|y|+2}\right)\\
&\leqslant\frac{1}{2}\left(|x-z|+|y-w|\right)\left(1+\frac{|x-z|+|y-w|}{2}\right)
\end{align*}
\end{proof}

\begin{lemma}\label{ValRankPostLoss}
For a standard ranking problem, where the true values follow a
continuous distribution with bounded density, using a regular ranking
method, the expected posterior total loss of value ranking is
$$V_p={\mathbb
    E}_{\Theta|X,\pi}\left(R_p\left(\Theta_{\tau_p^{-1}(1)},\ldots,\Theta_{\tau_p^{-1}(p)}\right)\right)=O\left(\sum_{i,j=1}^p \left({\sigma_{ip}}^2+{\sigma_{jp}}^2\right)^{\frac{1}{3}}\right)$$
\end{lemma}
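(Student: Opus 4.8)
The plan is to bound the expected posterior loss $V_p$ by splitting the sum over pairs $(i,j)$ according to how large $X_j - X_i$ is relative to the error magnitudes, and then apply the three cases of Proposition~\ref{PropPairwiseBounds} to each part. Write $\Sigma_{ij} = \sqrt{\sigma_{ip}^2 + \sigma_{jp}^2}$. Value ranking puts $i$ before $j$ exactly when $X_i > X_j$, so the relevant posterior pairwise loss terms are ${\mathbb E}(l(\Theta_j,\Theta_i)\mid X_i,X_j)$ over pairs with $X_i > X_j$ — i.e. pairs that value ranking treats as $X_i$ above $X_j$ but where we must control the loss charged by $l$ when in fact $\Theta_j$ could exceed $\Theta_i$. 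For the ``well-separated'' pairs where $X_i - X_j$ is much larger than $c(|X_i|+|X_j|+2)\Sigma_{ij}$, Proposition~\ref{PropPairwiseBounds}(i) gives a bound of order $\Sigma_{ij}^2 (|X_i|+|X_j|+d)^2 /(X_i-X_j)^2$ plus a lower-order term. For the ``close'' pairs, Proposition~\ref{PropPairwiseBounds}(ii) gives a bound of order $D + \lambda c\,\Sigma_{ij}(|X_i|+|X_j|+O(1))$.

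First I would take expectations over $X$ (hence over $\Theta$ and $\sigma$) of these two bounds. The key quantitative input, exactly as in the proof of Lemma~\ref{ValRankConsist}, is that $\Theta_i - \Theta_j$ has a bounded density, so ${\mathbb E}((\Theta_i-\Theta_j)^{-2}\wedge t) \le 2C\sqrt t$; combined with the fact that $X_i - X_j$ concentrates around $\Theta_i - \Theta_j$ with fluctuation of order $\Sigma_{ij}$, one gets that the ``separated'' contribution from pair $(i,j)$ is $O(\Sigma_{ij}^{2/3})$ after optimizing the truncation level $t \sim \Sigma_{ij}^{-2/3}$ (this is where the cube-root exponent enters: balancing a term $\sim \Sigma_{ij}^2 t$ against the probability-scale $\sim \Sigma_{ij}^{2/3}$ of the complementary ``close'' region, whose pairwise loss is $O(1)$). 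The moment hypotheses on the true prior and on $h_p$ control the $(|X_i|+|X_j|+d)^2$ and $|X_i|+|X_j|$ factors, since $\mathbb{E}(X_{ip}^2) = \mathbb{E}(\Theta_i^2) + \mathbb{E}(\sigma_{ip}^2)$ is bounded uniformly in $i$ and $p$, so these weights are absorbed into the implied constant (using independence of $(\Theta_i,\Theta_j,\sigma_{ip},\sigma_{jp})$ to factor the expectations, or Cauchy--Schwarz).

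Summing over all pairs then yields $V_p = O\bigl(\sum_{i,j}\Sigma_{ij}^{2/3}\bigr)$, and since $\Sigma_{ij}^{2/3} = (\sigma_{ip}^2+\sigma_{jp}^2)^{1/3}$ this is precisely the claimed bound. I expect the main obstacle to be the bookkeeping of the random thresholds: the split into ``separated'' versus ``close'' pairs depends on $X_i, X_j$ themselves, so the region $\{X_i - X_j > 2c(|X_i|+|X_j|+2)\Sigma_{ij}\}$ is not a simple half-line in $X_i - X_j$, and one must argue that on its complement the relevant probability is still $O(\Sigma_{ij}^{2/3})$ after accounting for the $(|X_i|+|X_j|)$-dependence of the threshold — this requires a careful conditioning on $|X_i|+|X_j|$ (or a crude bound using the boundedness of $\mathbb{E}((|X_i|+|X_j|)^k)$ for small $k$) before applying the bounded-density estimate to $\Theta_i - \Theta_j$. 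The Lipschitz-in-data Lemma~\ref{Inequality} is presumably held in reserve for a later argument comparing the loss at $\Theta$ to the loss at $X$; it is not needed for this particular estimate, which only uses the two upper-bound cases of Proposition~\ref{PropPairwiseBounds}.
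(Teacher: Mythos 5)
Your overall architecture matches the paper's: split the pairs with $X_{ip}>X_{jp}$ into ``close'' and ``separated'' according to a data-dependent threshold, control the separated pairs with Proposition~\ref{PropPairwiseBounds}(i) and the close pairs with Proposition~\ref{PropPairwiseBounds}(ii), and bound the expected number of close pairs using the bounded density of the $\Theta$-differences together with Chebyshev for $X_{ip}-\Theta_i$. The exponent $\tfrac{2}{3}$ does arise from the balancing you describe: the paper's cut is at $0\leqslant X_{ip}-X_{jp}<4c(|X_{ip}|+|X_{jp}|+2)({\sigma_{ip}}^2+{\sigma_{jp}}^2)^{1/3}$, so that the separated contribution $\Sigma_{ij}^2/\Sigma_{ij}^{4/3}$, the close-pair probability, and the Chebyshev tail $P(|X_{ip}-\Theta_i|>Ac\,{\sigma_{ip}}^{2/3})\leqslant {\sigma_{ip}}^{2/3}/(A^2c^2)$ all land at the same order $\Sigma_{ij}^{2/3}$.

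The genuine gap is in the step you yourself flag as the main obstacle, and your closing remark that Lemma~\ref{Inequality} ``is not needed for this particular estimate'' is wrong: that lemma is the paper's tool for exactly this step. Because the threshold carries the factor $|X_{ip}|+|X_{jp}|+2$, the paper rewrites the close event as $0\leqslant\frac{X_{ip}-X_{jp}}{|X_{ip}|+|X_{jp}|+2}<4c\,\Sigma_{ij}^{2/3}$, notes that the self-normalised variable $\frac{\Theta_i-\Theta_j}{|\Theta_i|+|\Theta_j|+2}$ has bounded density near $0$, and invokes Lemma~\ref{Inequality} to show the $X$- and $\Theta$-normalised quantities differ by at most $\Delta(1+\Delta)$ with $\Delta=\tfrac12(|X_{ip}-\Theta_i|+|X_{jp}-\Theta_j|)$, which Chebyshev controls at the ${\sigma}^{2/3}$ scale; the same normalisation is what cancels the $(|X_{ip}|+|X_{jp}|+d)^2$ weights in the separated-pair bound. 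Your proposed substitutes do not recover the stated rate: conditioning on $|X_i|+|X_j|$ destroys the unconditional bounded-density property of $\Theta_i-\Theta_j$ (the conditional density need not be bounded uniformly over the conditioning value), and a crude truncation at level $M$ via Markov leads to optimising $M\Sigma_{ij}^{2/3}+M^{-1}\mathbb{E}(|X_i|+|X_j|)$, which gives only $O(\Sigma_{ij}^{1/3})=O(({\sigma_{ip}}^2+{\sigma_{jp}}^2)^{1/6})$ per pair under the paper's finite-variance hypothesis --- strictly weaker than the claimed bound, and too weak for the use made of this lemma in Theorem~\ref{MainTheorem}. You need the self-normalisation and Lemma~\ref{Inequality} (or an equivalent device) to close the argument.
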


\begin{proof}
  Let $\tau_p$ be the value ranking of the data. That is,
  $\tau_p(i)<\tau_p(j)$ if $X_{ip}>X_{jp}$. We define a set ${\mathcal
    C}_p$ of all close pairs by $(i,j)\in{\mathcal C}_p$
  if $$0\leqslant
  X_{ip}-X_{jp}<4c(|X_{ip}|+|X_{jp}|+2)\left(\sqrt{{\sigma_{ip}}^2+{\sigma_{jp}}^2}\right)^{\frac{2}{3}}$$
  From parts (i) and (ii) of Proposition~\ref{PropPairwiseBounds}, we
  get
\begin{align*}
V_p&\leqslant
u\sum_{(i,j)\in {\mathcal
    C}_p}D+4\lambda c
\left(\left(\sqrt{{\sigma_{ip}}^2+{\sigma_{jp}}^2}\right)\left(|X_{ip}|+|X_{jp}|+2+\frac{1}{c}\right)\right)\\
&\qquad+u\sum_{\stackrel{\tau(i)<\tau(j)}{(i,j)\not\in{\mathcal C}_p}}8Dc^2\lambda\left({\sigma_{ip}}^2+{\sigma_{jp}}^2\right)(|X_{ip}|+|X_{jp}|+d)^2\left(\frac{2}{(X_{jp}-X_{ip})^2}+\frac{3\lambda}{2(X_{jp}-X_{ip})}\right)
\end{align*}
For $(i,j)\not\in{\mathcal C}_p$, we have
$$8Dc^2\lambda\left({\sigma_{ip}}^2+{\sigma_{jp}}^2\right)(|X_{ip}|+|X_{jp}|+d)^2\left(\frac{2}{(X_{jp}-X_{ip})^2}+\frac{3\lambda}{2(X_{jp}-X_{ip})}\right)=O\left(\left({\sigma_{ip}}^2+{\sigma_{jp}}^2\right)^{\frac{1}{3}}\right)$$

On the other hand, for $(i,j)\in{\mathcal C}_p$, we have $$
D+4\lambda c
\left(\left(\sqrt{{\sigma_{ip}}^2+{\sigma_{jp}}^2}\right)\left(|X_{ip}|+|X_{jp}|+2+\frac{1}{c}\right)\right)=O(1)$$
Thus
$$V_p=O(|{\mathcal C}_p|)+\sum_{i,j=1}^p
O\left(\left({\sigma_{ip}}^2+{\sigma_{jp}}^2\right)^{\frac{1}{3}}\right)$$
We have that
$${\mathbb E}\left(|{\mathcal C}_p|\right)=\sum_{i,j=1}^p
P\left((i,j)\in {\mathcal C}_p\right)=\sum_{i,j=1}^p
P\left(0\leqslant
\frac{X_{ip}-X_{jp}}{|X_{ip}|+|X_{jp}|+2}<4c\left({{\sigma_{ip}}^2+{\sigma_{jp}}^2}\right)^{\frac{1}{3}}\right)$$
Thus is is sufficient to show that $$P\left(0\leqslant
\frac{X_{ip}-X_{jp}}{|X_{ip}|+|X_{jp}|+2}<4c\left({{\sigma_{ip}}^2+{\sigma_{jp}}^2}\right)^{\frac{1}{3}}\right)=O\left(\left({\sigma_{ip}}^2+{\sigma_{ip}}^2\right)^{\frac{1}{3}}\right)$$
We can show for any $A>0$,
\begin{align*}
P\left(0\leqslant
\frac{X_{ip}-X_{jp}}{|X_{ip}|+|X_{jp}|+2}<4c\left({{\sigma_{ip}}^2+{\sigma_{jp}}^2}\right)^{\frac{1}{3}}\right)&\leqslant
P\left(-4c\left({{\sigma_{ip}}^2+{\sigma_{jp}}^2}\right)^{\frac{1}{3}}\leqslant
\frac{X_{ip}-X_{jp}}{|X_{ip}|+|X_{jp}|+2}<4c\left({{\sigma_{ip}}^2+{\sigma_{jp}}^2}\right)^{\frac{1}{3}}\right)\\
&\leqslant
P\left(\left|
\frac{\Theta_{i}-\Theta_{j}}{|\Theta_{i}|+|\Theta_{j}|+2}\right|<(4+A)c\left({{\sigma_{ip}}^2+{\sigma_{jp}}^2}\right)^{\frac{1}{3}}\right)\\
&\qquad\qquad+P\left(\left|
\frac{X_{ip}-X_{jp}}{|X_{ip}|+|X_{jp}|+2}\right|-\left|
\frac{\Theta_{i}-\Theta_{j}}{|\Theta_{i}|+|\Theta_{j}|+2}\right|>Ac\left({{\sigma_{ip}}^2+{\sigma_{jp}}^2}\right)^{\frac{1}{3}}\right)\\
\end{align*}
Lemma~\ref{Inequality} tells us that 
$$\left|
\frac{X_{ip}-X_{jp}}{|X_{ip}|+|X_{jp}|+2}\right|-\left|
\frac{\Theta_{i}-\Theta_{j}}{|\Theta_{i}|+|\Theta_{j}|+2}\right|\leqslant \frac{\left(|X_{ip}-\Theta_i|+|X_{jp}-\Theta_j|\right)}{2}\left(1+\frac{|X_{ip}-\Theta_i|+|X_{jp}-\Theta_j|}{2}\right)$$
Letting
$\Delta=\frac{\left(|X_{ip}-\Theta_i|+|X_{jp}-\Theta_j|\right)}{2}$,
we have $$
P\left(\left|
\frac{X_{ip}-X_{jp}}{|X_{ip}|+|X_{jp}|+2}\right|-\left|
\frac{\Theta_{i}-\Theta_{j}}{|\Theta_{i}|+|\Theta_{j}|+2}\right|>Ac\left({{\sigma_{ip}}^2+{\sigma_{jp}}^2}\right)^{\frac{1}{3}}\right)\leqslant
P\left(\Delta+\Delta^2>Ac\left({{\sigma_{ip}}^2+{\sigma_{jp}}^2}\right)^{\frac{1}{3}}\right)$$
For small enough $\sigma$, we have 
$Ac\left({{\sigma_{ip}}^2+{\sigma_{jp}}^2}\right)^{\frac{1}{3}}<1$, so 
$$P\left(\Delta+\Delta^2>Ac\left({{\sigma_{ip}}^2+{\sigma_{jp}}^2}\right)^{\frac{1}{3}}\right)<P\left(2\Delta>Ac\left({{\sigma_{ip}}^2+{\sigma_{jp}}^2}\right)^{\frac{1}{3}}\right)\leqslant P\left(|X_{ip}-\Theta_i|>Ac{\sigma_{ip}}^{\frac{2}{3}}\right)+P(\left(|X_{jp}-\Theta_{jp}|>AC{\sigma_{jp}}^{\frac{2}{3}}\right)$$
Since $X_{ip}-\Theta_i$ has mean 0 and variance ${\sigma_{ip}}^2$,
Chebyshev's inequality
gives $$P\left(|X_{ip}-\Theta_i|>Ac{\sigma_{ip}}^{\frac{2}{3}}\right)\leqslant \frac{{\sigma_{ip}}^2}{\left(Ac{\sigma_{ip}}^{\frac{2}{3}}\right)^2}=\frac{{\sigma_{ip}}^{\frac{2}{3}}}{A^2c^2}$$
Meanwhile, since $\Theta_i$ and $\Theta_j$ are i.i.d. continuous
random variables with bounded density, and independent
of $\sigma_{ip}$ and $\sigma_{jp}$, the value $\left|
\frac{\Theta_{i}-\Theta_{j}}{|\Theta_{i}|+|\Theta_{j}|+2}\right|$ has
a continuous distribution with bounded density on a neighbourhood of
0. Let the density be bounded by $\nu$. Then we have 
$$P\left(\left|
\frac{\Theta_{i}-\Theta_{j}}{|\Theta_{i}|+|\Theta_{j}|+2}\right|<(4+A)c\left({{\sigma_{ip}}^2+{\sigma_{jp}}^2}\right)^{\frac{1}{3}}\right)\leqslant (4+A)c\left({{\sigma_{ip}}^2+{\sigma_{jp}}^2}\right)^{\frac{1}{3}}\nu$$
\end{proof}

The main result of this paper is the following theorem:

\begin{theorem}\label{MainTheorem}  
  If the $\Theta_i$ are all drawn i.i.d. from a fixed continuous
  distribution with finite mean and variance, and $\sigma_{ip}$ are
  i.i.d. with finite mean, then any ranking method is consistent (with
  respect to a regular loss function) provided the following
  conditions hold.

  \begin{itemize}

  \item $s(p)p^{2}{\mathbb E}({\sigma_{ip}})^{\frac{1}{3}}\rightarrow
    0$.

  \item The loss function $R_p$ used for the ranking method is
    equivalent to an additive loss function generated by a restrained
    pairwise loss.

  \item The distribution of $\Theta_i$ has bounded mean and variance.

  \item The estimating prior distribution $\pi$ is tail-dominating.

  \end{itemize}
    
\end{theorem}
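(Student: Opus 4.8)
The plan is to show that asymptotically the standard ranking method agrees with value ranking, and then invoke Lemma~\ref{ValRankConsist}. The bridge between the two is Lemma~\ref{ConsistencyConditions}: it suffices to verify, for the set ${\mathcal M}_p$ of pairs misranked by $\hat{\rho}_p$ (relative to the true order), that $s(p){\mathbb E}(|{\mathcal M}_p|)\to 0$ and $s(p)\sum_{i,j}(\Theta_j-\Theta_i)P((i,j)\in{\mathcal M}_p)\to 0$. The key idea is that if $\hat{\rho}_p$ misranks $(i,j)$ with $\Theta_i<\Theta_j$, then the posterior expected loss of the estimated ranking is at least that of value ranking, so in particular the posterior loss contribution from pairs on which $\hat{\rho}_p$ disagrees with $\tau_p$ must be dominated by $V_p$, the expected posterior loss of value ranking, which Lemma~\ref{ValRankPostLoss} controls as $O(\sum_{i,j}({\sigma_{ip}}^2+{\sigma_{jp}}^2)^{1/3})$.

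First I would fix the comparison. Since $\hat{\rho}_p$ minimises posterior expected loss, ${\mathbb E}_{\Theta|X,\pi}(R_p(\Theta_{\hat{\rho}_p^{-1}(1)},\ldots))\leqslant V_p$. Now any pair $(i,j)$ with $X_{ip}<X_{jp}$ but $\hat{\rho}_p(i)<\hat{\rho}_p(j)$ — i.e.\ on which $\hat{\rho}_p$ disagrees with $\tau_p$ — contributes, via part (iii) of Proposition~\ref{PropPairwiseBounds}, a posterior loss term bounded below by roughly $\tfrac12\bigl(\tfrac{b^3}{(b+\lambda)^2}\bigr)\bigl((X_{jp}-X_{ip}-c(|X_{ip}|+|X_{jp}|+2)\sqrt{{\sigma_{ip}}^2+{\sigma_{jp}}^2})\land a\bigr)$ to the posterior loss of $\hat{\rho}_p$ (after decomposing $R_p$ into pairwise contributions as in the proof of Lemma~\ref{ConsistencyConditions}, using regularity of $R_p$). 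Summing over such ``far'' disagreeing pairs and comparing with $V_p$ forces
$$s(p)\sum_{\hat{\rho}_p(i)<\hat{\rho}_p(j),\ X_{jp}-X_{ip}>4c(|X_{ip}|+|X_{jp}|+2)\sqrt{{\sigma_{ip}}^2+{\sigma_{jp}}^2}}\bigl((X_{jp}-X_{ip})\land a\bigr)=O\Bigl(s(p)\sum_{i,j}\bigl({\sigma_{ip}}^2+{\sigma_{jp}}^2\bigr)^{1/3}\Bigr),$$
which under the hypothesis $s(p)p^2{\mathbb E}(\sigma_{ip})^{1/3}\to 0$ (so $s(p)\sum_{i,j}({\sigma_{ip}}^2+{\sigma_{jp}}^2)^{1/3}\to 0$ in expectation) tends to $0$.

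Next I would pass from ``disagrees with $\tau_p$ and far in $X$'' to ``misranked relative to $\Theta$''. A pair $(i,j)\in{\mathcal M}_p$ (so $\Theta_i<\Theta_j$, $\hat{\rho}_p(i)<\hat{\rho}_p(j)$) either has $X_{ip}\geqslant X_{jp}$ — but then $|X_{ip}-\Theta_i|+|X_{jp}-\Theta_j|\geqslant \Theta_j-\Theta_i$, a large-deviation event controlled by Chebyshev exactly as in Lemma~\ref{ValRankConsist} — or $X_{ip}<X_{jp}$, putting it among the disagreeing pairs handled above, split further according to whether $X_{jp}-X_{ip}$ exceeds the threshold $4c(|X_{ip}|+|X_{jp}|+2)\sqrt{{\sigma_{ip}}^2+{\sigma_{jp}}^2}$ (the ``far'' case, handled by the displayed bound) or not (the ``close'' case: then $(i,j)\in{\mathcal C}_p$ in the notation of Lemma~\ref{ValRankPostLoss}, and ${\mathbb E}(|{\mathcal C}_p|)=O(\sum_{i,j}({\sigma_{ip}}^2+{\sigma_{jp}}^2)^{1/3})$, which after multiplying by $s(p)$ vanishes; moreover on such pairs $\Theta_j-\Theta_i$ is small with high probability by the same Lemma~\ref{Inequality} estimate). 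Combining: $s(p){\mathbb E}(|{\mathcal M}_p|)\to 0$, and for the weighted sum, $(\Theta_j-\Theta_i)$ is, on each of these events, either small (close case / large-deviation controlled) or bounded by $(X_{jp}-X_{ip})+|X_{ip}-\Theta_i|+|X_{jp}-\Theta_j|$ with the first term handled by the displayed bound and the error terms by Chebyshev as in Lemma~\ref{ValRankConsist}. Both conditions of Lemma~\ref{ConsistencyConditions} then hold, giving consistency.

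The main obstacle I expect is the bookkeeping in the second step: cleanly decomposing the posterior expected loss of $\hat{\rho}_p$ into pairwise lower-bound contributions so that the minimality comparison with $V_p$ actually yields a bound on the ``bad'' pairs — one must be careful that the pairwise lower bounds from Proposition~\ref{PropPairwiseBounds}(iii) add up without double-counting, and that the absolute-value weights $(\Theta_j-\Theta_i)$ (rather than $(X_{jp}-X_{ip})$) are recovered with only $O(\sigma^{2/3})$-type error after integrating the Chebyshev tail bounds, uniformly enough to survive the factor $p^2$. The rest is an assembly of the already-proven lemmas.
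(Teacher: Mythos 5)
Your proposal follows essentially the same route as the paper: compare the posterior expected loss of $\hat{\rho}_p$ with $V_p$ from Lemma~\ref{ValRankPostLoss} via minimality, lower-bound the contribution of pairs where $\hat{\rho}_p$ disagrees with value ranking using Proposition~\ref{PropPairwiseBounds}(iii), split those pairs by whether $X_{jp}-X_{ip}$ exceeds a $\sigma$-dependent threshold, and reduce ${\mathcal M}_p$ to this disagreement set together with the pairs already controlled by Lemma~\ref{ValRankConsist}, feeding both into Lemma~\ref{ConsistencyConditions}. The argument is correct and, if anything, you are more explicit than the paper about verifying the weighted-sum condition $s(p)\sum(\Theta_j-\Theta_i)P((i,j)\in{\mathcal M}_p)\to 0$, which the paper leaves largely to the reader.
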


\begin{proof}
  Let
  $\hat{\rho}_p$ denote the estimated ranking. By
  definition,
$${\mathbb
  E}_{\Theta|X,\pi}\left(R_p\left(\Theta_{\hat{\rho}_p^{-1}(1)},\ldots,\Theta_{\hat{\rho}_p^{-1}(p)}\right)\right)\leqslant
  V_p=O\left(\left({\sigma_{ip}}^2+{\sigma_{jp}}^2\right)^{\frac{2}{3}}\right)$$
On the other hand,
$${\mathbb
  E}_{\Theta|X,\sigma,\pi}R_p\left(\Theta_{\hat{\rho}_p^{-1}(1)},\ldots,\Theta_{\hat{\rho}_p^{-1}(p)}\right)\geqslant
\frac{c}{2}\left(\frac{b^3}{(b+\lambda)^2}\right)\sum_{(i,j)\in
  {\mathcal D}_p}
\left(X_{ip}-X_{jp}-c(|X_{ip}|+|X_{jp}|+2)\sqrt{{\sigma_{ip}}^2+{\sigma_{jp}}^2}\right)\land a$$
where ${\mathcal D}_p$is the set of pairs on which $\hat{\rho}_p$ and
$\tau_p$ disagree, defined by $${\mathcal D}_p=\{(i,j)|(X_{ip}>X_{jp})\land(\hat{\rho}_p(i)>\hat{\rho}_p(j))\}$$
Thus, we have shown that
\begin{align*}
\sum_{(i,j)\in
  {\mathcal D}_p}
\left(X_{ip}-X_{jp}-c(|X_{ip}|+|X_{jp}|+2)\sqrt{{\sigma_{ip}}^2+{\sigma_{jp}}^2}\right)\land a&=O\left(\sum_{i,j=1}^p\left({\sigma_{ip}}^2+{\sigma_{jp}}^2\right)^{\frac{1}{3}}\right)
\end{align*}
Since
$c(|X_{ip}|+|X_{jp}|+2)\sqrt{{\sigma_{ip}}^2+{\sigma_{jp}}^2}=o\left(\sum_{i,j=1}^p\left({\sigma_{ip}}^2+{\sigma_{jp}}^2\right)^{\frac{1}{3}}\right)$,
we deduce 
\begin{align*}
\sum_{(i,j)\in
  {\mathcal D}_p}
\left(X_{ip}-X_{jp}\right)\land a&=O\left(\sum_{i,j=1}^p\left({\sigma_{ip}}^2+{\sigma_{jp}}^2\right)^{\frac{1}{3}}\right)
\end{align*}
We want to show that $|{\mathcal D}_p|=O\left(\sum_{i,j=1}^p \left({\sigma_{ip}}^2+{\sigma_{jp}}^2\right)^{\frac{1}{6}}\right)$
For any $K>0$, we have that the number of pairs $(i,j)$ for which
$0<X_{ip}-X_{jp}<K\left({\sigma_{ip}}^2+{\sigma_{jp}}^2\right)^{\frac{1}{6}}$
is
$O\left(p^2\left({\sigma_{ip}}^2+{\sigma_{jp}}^2\right)^{\frac{1}{6}}\right)$
Thus, at least $|{\mathcal
  D}_p|-O\left(p^2\left({\sigma_{ip}}^2+{\sigma_{jp}}^2\right)^{\frac{1}{6}}\right)$
of the pairs $(i,j)\in {\mathcal D}_p$ satisfy $X_{ip}-X_{jp}\geqslant
K\left({\sigma_{ip}}^2+{\sigma_{jp}}^2\right)^{\frac{1}{6}}$. Thus
\begin{align*}
 \left(|{\mathcal D}_p|-O\left(p^2\left({\sigma_{ip}}^2+{\sigma_{jp}}^2\right)^{\frac{1}{6}}\right)\right)K\left({\sigma_{ip}}^2+{\sigma_{jp}}^2\right)^{\frac{1}{6}}&\leqslant \sum_{(i,j)\in
  {\mathcal D}_p}\left(X_{ip}-X_{jp}\right)\land
 a=O\left(\sum_{i,j=1}^p\left({\sigma_{ip}}^2+{\sigma_{jp}}^2\right)^{\frac{1}{3}}\right)\\
 \left(|{\mathcal  D}_p|-O\left(p^2\left({\sigma_{ip}}^2+{\sigma_{jp}}^2\right)^{\frac{1}{6}}\right)\right)&=O\left(\sum_{i,j=1}^p\left({\sigma_{ip}}^2+{\sigma_{jp}}^2\right)^{\frac{1}{6}}\right)\\
|{\mathcal  D}_p|&=O\left(\sum_{i,j=1}^p\left({\sigma_{ip}}^2+{\sigma_{jp}}^2\right)^{\frac{1}{6}}\right)\\
\end{align*}
Thus if $s(p)p^2{\mathbb
  E}\left({\sigma_{ip}}^{\frac{1}{3}}\right)\rightarrow 0$, then
$s(p)|{\mathcal D}_p|\rightarrow 0$.

We know that the set of misranked pairs ${\mathcal M}_p$ from
Lemma~\ref{ConsistencyConditions} is contained in the
union $${\mathcal D}_p\cup\{(i,j)|\Theta_i<\Theta_j,X_{ip}>X_{jp}\}$$
Thus, since value ranking is consistent by Lemma~\ref{ValRankConsist},
it is sufficient to prove 
$$s(p)|{\mathcal D}_p|\rightarrow 0\qquad\textrm{and}\qquad s(p)\sum_{(i,j)\in {\mathcal D}_p}(\Theta_j-\Theta_i)_+\rightarrow
0$$
\end{proof}

\section{Examples}\label{Examples}

The key condition in Theorem~\ref{MainTheorem} is that the prior
distribution should be at least as heavy-tailed as the error
distribution. In this section, we provide examples where value ranking
is consistent, but posterior mean ranking is not consistent because
the error distribution is heavier-tailed than the prior
distribution. 

We use posterior mean ranking because it is relatively easy to analyse
--- it is sufficient to show that the pairwise ranking of a pair of
units is incorrect, which can be done by bounding the posterior
means. Furthermore, for these examples the conditional variances for
each unit follow a distribution with a point mass at 0. This is
convenient for proving the results, because for units with $\sigma=0$,
we know that the $\mu_i=x_i=\theta_i$, so we do not need to worry
about finding lower bounds for the posterior mean. It is natural to
assume that similar inconsistency results will also hold for other
ranking methods, and for continuous distributions for
$\sigma_{ip}$. However, the proofs in these cases would be more challenging.

In our first example, the error distribution is heavy-tailed, and the
prior distribution is normal. In the second, the error distribution is
normal, and the prior has very light tails.

\subsection{General Error}

Suppose the true distribution of $\Theta$ is a Pareto distribution
with $\theta_{\textrm{min}}=1$ and $\alpha=4$. Suppose we model the
data using a normal prior with mean and variance estimated from the
data. Asymptotically, these estimates will converge to the true mean
and variance, which are $1.25$ and $\frac{2}{9}$. Suppose the error
distribution $X_{ip}-\Theta_{i}$ has density function
$$f(x)=\frac{\sqrt{2}}{\pi\sigma_{ip}\left(1+\left(\frac{x}{\sigma_{ip}}\right)^4\right)}$$
This has mean 0 and variance ${\sigma_{ip}}^2$. Suppose that
$\sigma_{ip}$ is zero with probability $\frac{1}{2}$, and otherwise
follows an exponential distribution with mean $v_p$, where $p^\alpha
v_p\rightarrow a>0$ for some $a$ and $\alpha$.

By Lemma~\ref{ValRankConsist}, we know that value ranking is total loss
consistent for this problem provided $p^2v_p\rightarrow 0$. We will
show that there are sequences $v_p$ with this property for which
posterior mean ranking is not consistent.

\begin{lemma}\label{PMBound}
  For a random parameter $\Theta$ with normal prior with mean $\mu$
  and variance $\tau^2$, suppose we have an observation $X=\Theta+E$ where $E$ has
  density
  function $$f_E(x)=\frac{\sqrt{2}}{\pi\sigma\left(1+\left(\frac{x}{\sigma}\right)^4\right)}$$
  Suppose that $x-\mu>\frac{2\sqrt[4]{27}\tau^2}{\sigma}$, $x-\mu>2\sigma$ and $x-\mu>8\tau$.
  Then the posterior mean of $\Theta$ satisfies
  $${\mathbb E}(\Theta|X=x)\leqslant \frac{x+\mu}{2}+\frac{289}{4096}e^{2}\tau^2\sigma^{-4}(x-\mu)^{5}e^{-\frac{(x-\mu)^2}{8\tau^2}}$$
\end{lemma}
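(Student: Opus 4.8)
The plan is to write the posterior mean as a ratio of integrals against the unnormalised posterior density, which is the product of the $N(\mu,\tau^2)$ prior density and the error likelihood $f_E(x-\theta)$,
\[
q(\theta)\;\propto\;e^{-\frac{(\theta-\mu)^2}{2\tau^2}}\Bigl(1+\bigl(\tfrac{x-\theta}{\sigma}\bigr)^4\Bigr)^{-1},
\]
and to exploit that, since $x-\mu$ is large compared with $\tau$, the Gaussian prior factor forces almost all of the posterior mass into an $O(\tau)$ neighbourhood of $\mu$, so the posterior mean can exceed the midpoint $\tfrac{x+\mu}2$ only by an exponentially small amount. Write $\delta=x-\mu>0$ and $Z=\int_{\mathbb R}q$. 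Because the integrand is $\le 0$ on $\bigl(-\infty,\tfrac{x+\mu}2\bigr)$,
\[
{\mathbb E}(\Theta\mid X=x)-\tfrac{x+\mu}2\;=\;\frac1Z\int_{\mathbb R}\bigl(\theta-\tfrac{x+\mu}2\bigr)q(\theta)\,d\theta\;\le\;\frac1Z\int_{(x+\mu)/2}^{\infty}\bigl(\theta-\tfrac{x+\mu}2\bigr)q(\theta)\,d\theta,
\]
and it suffices to bound this ratio by $\tfrac{289}{4096}e^2\tau^2\sigma^{-4}\delta^5e^{-\delta^2/(8\tau^2)}$.

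For the numerator I would bound the heavy-tailed likelihood factor by $1$ and estimate the Gaussian tail above the midpoint: after the substitution $v=\theta-\mu$ and completing the square this is a quantity of the form $\int_{\delta/2}^{\infty}(v-\tfrac\delta2)e^{-v^2/(2\tau^2)}\,dv\le\tau^2e^{-\delta^2/(8\tau^2)}$, which is where the factor $e^{-\delta^2/(8\tau^2)}$ enters. (To get the sharpest constant one would instead split $\bigl[\tfrac{x+\mu}2,\infty\bigr)$ at $\theta=x-\sigma$, which is legitimate precisely because $\delta>2\sigma$, and on the lower piece keep $1+z^4\ge z^4$, picking up an extra $\int_\sigma^\infty w^{-4}\,dw$; the hypothesis $\delta>2\sqrt[4]{27}\tau^2/\sigma$ is what then lets the two pieces be consolidated.) For the denominator I would discard everything outside a short symmetric interval about $\mu$ — radius $2\tau$ being the choice that ultimately yields the factor $e^2$ — on which $e^{-(\theta-\mu)^2/(2\tau^2)}\ge e^{-2}$, while $|x-\theta|\le\delta+2\tau\le\tfrac54\delta$ (using $\delta>8\tau$) and $\tfrac{5\delta}{4\sigma}>1$ (using $\delta>2\sigma$), so $1+\bigl(\tfrac{x-\theta}{\sigma}\bigr)^4\le2\bigl(\tfrac{5\delta}{4\sigma}\bigr)^4$ and hence $q\ge\tfrac12e^{-2}\bigl(\tfrac{4\sigma}{5\delta}\bigr)^4$ throughout that interval; integrating gives a bound of the form $Z\ge\tfrac{512}{625}e^{-2}\sigma^4\tau\,\delta^{-4}$. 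Dividing the numerator bound by $Z$ and using the hypotheses once more to express the result in the required combination of $\tau,\sigma,\delta$ produces the stated bound with the explicit constant $\tfrac{289}{4096}e^2$.

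The crux is the constant-and-exponent bookkeeping rather than any conceptual obstacle. Because $E$ has only a fourth-power tail, the lower bound on the normalising constant $Z$ must be localised carefully so that the correct power of $\sigma/\delta$ is retained; and each of the three hypotheses gets used at a definite spot — $\delta>8\tau$ to keep $|x-\theta|$ comparable to $\delta$ on the interval used for $Z$, $\delta>2\sigma$ both to validate the split of the numerator and to replace $1+z^4$ by $2z^4$, and $\delta>2\sqrt[4]{27}\tau^2/\sigma$ to combine the pieces of the numerator bound — so the estimates have to be carried out in exactly this order for the explicit constant to come out rather than a generic $O(\cdot)$.
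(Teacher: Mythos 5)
Your overall strategy is the same as the paper's: write the posterior-mean excess over the midpoint as a ratio, bound the numerator by the Gaussian tail above $\theta=\tfrac{x+\mu}{2}$ (which is where $e^{-\delta^2/(8\tau^2)}$ comes from), and lower-bound the normalising constant $Z$ by localising near $\mu$. The numerator estimate is fine. The gap is in the denominator. You localise $Z$ to the window $[\mu-2\tau,\mu+2\tau]$ of width $4\tau$, which gives $Z\gtrsim e^{-2}\sigma^4\tau\,\delta^{-4}$ and hence a final bound of the form $C e^{2}\tau\sigma^{-4}\delta^{4}e^{-\delta^2/(8\tau^2)}$. The lemma claims $\tfrac{289}{4096}e^{2}\tau^{2}\sigma^{-4}\delta^{5}e^{-\delta^2/(8\tau^2)}$, and converting $\tau\delta^{4}$ into $\tau^{2}\delta^{5}$ requires a lower bound on $\tau\delta$, which the hypotheses do not supply: all three are scale-relative (e.g.\ $\sigma=\tau=\epsilon$, $\delta=10\epsilon$ satisfies $\delta>2\sigma$, $\delta>8\tau$, $\delta>2\sqrt[4]{27}\tau^{2}/\sigma$ while $\tau\delta=10\epsilon^{2}\to0$, and there your bound is $O(\epsilon)$ against a target of $O(\epsilon^{3})$). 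So the closing step ``using the hypotheses once more to express the result in the required combination'' cannot be carried out, and your proposed refinement (splitting the numerator at $x-\sigma$) sharpens the wrong integral --- it does not touch $Z$.

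The fix, which is what the paper does, is to shrink the localisation window for $Z$ to $\bigl[\mu,\,\mu+\tfrac{16\tau^2}{x-\mu}\bigr]$. Its width $\tfrac{16\tau^2}{\delta}$ is at most $2\tau$ precisely because $\delta>8\tau$, so the Gaussian factor is still $\geqslant e^{-2}$ there, while taking the window to the \emph{right} of $\mu$ keeps $x-\theta\leqslant\delta$ so that $\sigma^4+(x-\theta)^4\leqslant\tfrac{17}{16}\delta^4$ (using $\delta>2\sigma$). This yields $Z\gtrsim e^{-2}\sigma^4\tau^{2}\delta^{-5}$, supplying exactly the extra factor of $\tau/\delta$ your version is missing. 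With that replacement your argument goes through and is otherwise equivalent to the paper's.
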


\begin{proof}
The posterior density is proportional to
$$\pi(\theta)f(x-\theta)\propto\frac{e^{-\frac{(\theta-\mu)^2}{2\tau^2}}}{\sigma^4+\left(x-\theta\right)^4}$$
Now for $\mu<\theta<\mu+\frac{16\tau^2}{x-\mu}<\mu+2\tau$, we have
$\sigma^4+(x-\theta)^4<(x-\mu)^4+\sigma^4<\frac{17}{16}(x-\mu)^4$, so that
$\frac{e^{-\frac{(\theta-\mu)^2}{2\tau^2}}}{\sigma^4+\left(x-\theta\right)^4}>\frac{16}{17}e^{-2}(x-\mu)^{-4}$.
Therefore $$\int_{-\infty}^{\infty}\pi(\theta)f(x-\theta)\,d\theta>\int_{\mu}^{\mu+\frac{16\tau^2}{x-\mu}}\pi(\theta)f(x-\theta)\,d\theta>\frac{16\tau^2}{x-\mu}\times \frac{16}{17}e^{-2}(x-\mu)^{-4}=\frac{256}{17}e^{-2}(x-\mu)^{-5}$$
while for $\theta>\frac{x+\mu}{2}$, we have 
$$\pi(\theta)f(x-\theta)\propto
\frac{e^{-\frac{(\theta-\mu)}{2\tau^2}}}{\sigma^4+\left(x-\theta\right)^4}<\frac{e^{-\frac{(\theta-\mu)^2}{2\tau^2}}}{\sigma^4}=\frac{e^{-\frac{\left(\theta-\mu-\frac{x-\mu}{2}+\frac{x-\mu}{2}\right)^2}{2\tau^2}}}{\sigma^4}<e^{-\frac{(x-\mu)^2}{8\tau^2}}\sigma^{-4}e^{-\frac{(x-\mu)\left(\theta-\mu-\frac{x-\mu}{2}\right)}{2\tau^2}}$$
This gives us
\begin{align*}
\int_{\frac{x+\mu}{2}}^\infty
(\theta-\mu)\pi(\theta)f(x-\theta)\,d\theta&<e^{-\frac{(x-\mu)^2}{8\tau^2}}\sigma^{-4}\int_{\frac{x+\mu}{2}}^\infty
(\theta-\mu)
e^{-\frac{(x-\mu)\left(\theta-\frac{x+\mu}{2}\right)}{2\tau^2}}\,d\theta\\
&=e^{-\frac{(x-\mu)^2}{8\tau^2}}\sigma^{-4}\left(\frac{2\tau^2}{(x-\mu)}\left[-(\theta-\mu)
  e^{-\frac{(x-\mu)\left(\theta-\frac{(x+\mu)}{2}\right)}{2\tau^2}}\right]_{\frac{(x+\mu)}{2}}^\infty+\frac{2\tau^2}{(x-\mu)}\int_{\frac{(x+\mu)}{2}}^\infty
e^{-\frac{(x-\mu)\left(\theta-\frac{(x+\mu)}{2}\right)}{2\tau^2}}\,d\theta\right)\\
&=e^{-\frac{(x-\mu)^2}{8\tau^2}}\sigma^{-4}\left(\tau^2+\frac{4\tau^4}{(x-\mu)^2}\right)\\
&<\frac{17}{16}\tau^2\sigma^{-4}e^{-\frac{(x-\mu)^2}{8\tau^2}}
\end{align*}
Therefore
\begin{align*}
{\mathbb E}(\Theta|X=x)&=\mu+\frac{\int_{-\infty}^\infty
  (\theta-\mu)\pi(\theta)f(x-\theta)\,d\theta}{\int_{-\infty}^{\infty}\pi(\theta)f(x-\theta)\,d\theta}\\
&=\mu+\frac{\int_{-\infty}^{\frac{(x+\mu)}{2}}
  (\theta-\mu)\pi(\theta)f(x-\theta)\,d\theta}{\int_{-\infty}^{\infty}\pi(\theta)f(x-\theta)\,d\theta}+\frac{\int_{\frac{(x+\mu)}{2}}^\infty
  (\theta-\mu)\pi(\theta)f(x-\theta)\,d\theta}{\int_{-\infty}^{\infty}\pi(\theta)f(x-\theta)\,d\theta}\\
&<\mu+{\mathbb E}\left(\Theta-\mu\middle|X=x,\Theta<\frac{x+\mu}{2}\right)+\frac{17}{256}e^{2}(x-\mu)^{5}\times
\frac{17}{16}\tau^2\sigma^{-4}e^{-\frac{(x-\mu)^2}{8\tau^2}}\\
&<\frac{x+\mu}{2}+\frac{17}{256}e^{2}(x-\mu)^{5}\times
\frac{17}{16}\tau^2\sigma^{-4}e^{-\frac{(x-\mu)^2}{8\tau^2}}\\
\end{align*}
\end{proof}

\begin{theorem}
For the following ranking problem:
\begin{itemize}
  
\item The true distribution of $\Theta$ is a Pareto distribution
  with $\theta_{\textrm{min}}=1$ and $\alpha=4$.

%\item  we model the
%data using a normal prior with mean and variance estimated from the
%data. Assymptotically, these estimates will converge to the true mean
%and variance, which are $1.25$ and $\frac{2}{9}$.

\item The error distribution has density function
$$f(x)=\frac{\sqrt{2}}{\pi\sigma_{ip}\left(1+\left(\frac{x}{\sigma_{ip}}\right)^4\right)}$$
%  (This has mean 0 and variance ${\sigma_{ip}}^2$).

\item $\sigma_{ip}$ is zero with probability $\frac{1}{2}$, and otherwise
follows an exponential distribution with mean $v_p$, where $p^\alpha
v_p\rightarrow a>0$ for some $a$ and $\alpha$.
\end{itemize}

\noindent posterior mean ranking with normal prior with mean and variance
estimated is inconsistent with respect to total per-unit misranking loss.
%any polynomially bounded
%consistency loss
%function 
\end{theorem}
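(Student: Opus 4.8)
The plan is to show that at stage $p$ posterior mean ranking misorders an expected $\Omega\!\big(p^{2}/(\log p)^{2}\big)$ pairs, so that the per‑unit misranking loss is at least $p^{-1}{\mathbb E}(|{\mathcal M}_p|)=\Omega\!\big(p/(\log p)^{2}\big)\not\to 0$. The effect to exploit is that a normal prior is too light‑tailed for this error distribution. Half of the units have $\sigma_{kp}=0$, hence $\hat\mu_k=X_{kp}=\Theta_k$ exactly; but a unit $j$ with a small positive conditional variance $\sigma_{jp}\asymp v_p$ and with $\Theta_j$ of order $\sqrt{\log p}$ has its likelihood peak near $X_{jp}\approx\Theta_j$ killed by the Gaussian factor $e^{-(\Theta_j-\mu)^{2}/2\tau^{2}}$ of the prior, so its posterior mass collapses onto a neighbourhood of the prior mean $\mu$, and $j$ is ranked near the bottom despite having a larger true value than almost every other unit. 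Such a ``collapsed'' unit is then misranked against every $\sigma=0$ unit $k$ with $\mu+1<\Theta_k<\Theta_j$, a positive fraction of all units. (Here $\mu$ and $\tau^{2}=\tfrac29$ are the limits of the estimated prior mean and variance; since these are consistent estimators, conditioning on the high‑probability event that they lie within any fixed $\epsilon$ of $\mu,\tau^{2}$ perturbs none of the estimates below by more than $O(\epsilon)$, so I treat $\mu,\tau^{2}$ as the prior parameters.)

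The one substantial ingredient is a posterior‑mean collapse estimate: if $\sigma\in[v_p,10v_p]$ and $x-\mu\geq\tau\sqrt{(32\alpha+1)\log p}$, then for all $p$ large the posterior mean of $\Theta$ under the $N(\mu,\tau^{2})$ prior and the given error satisfies $\hat\mu(x,\sigma)<\mu+1$. Lemma~\ref{PMBound} does not deliver this: its hypothesis $x-\mu>2\sqrt[4]{27}\,\tau^{2}/\sigma$ would force $x\gtrsim p^{\alpha}$, an event of probability only $O(p^{-4\alpha})$, and its conclusion $\hat\mu\leq(x+\mu)/2+o(1)$ is anyway far too weak when $x$ is large. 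Instead I would work directly with $\pi(\theta)f(x-\theta)\propto e^{-(\theta-\mu)^{2}/2\tau^{2}}\big(\sigma^{4}+(x-\theta)^{4}\big)^{-1}$. Since $\tau,\sigma=o(x-\mu)$, over the prior bulk $|\theta-\mu|\leq 3\tau$ one has $f(x-\theta)=(1+o(1))f(x-\mu)$ uniformly, so the normaliser is at least $(1-o(1))f(x-\mu)$ and $\int_{\mu}^{\infty}(\theta-\mu)\pi(\theta)f(x-\theta)\,d\theta\leq(2+o(1))\tfrac{\tau}{\sqrt{2\pi}}f(x-\mu)$, the factor $2$ absorbing the bounded inflation of $f$ between $\mu$ and $\tfrac{x+\mu}{2}$; the remaining piece $\theta>\tfrac{x+\mu}{2}$ is bounded exactly as in the proof of Lemma~\ref{PMBound}, contributing $O\!\big(\sigma^{-4}(x-\mu)^{4}e^{-(x-\mu)^{2}/8\tau^{2}}\big)$ relative to the normaliser, which is $o(1)$ precisely because $x-\mu\geq\tau\sqrt{(32\alpha+1)\log p}$ lets $e^{-(x-\mu)^{2}/8\tau^{2}}$ beat $\sigma^{-4}\leq v_p^{-4}\asymp p^{4\alpha}$. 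Thus $\hat\mu(x,\sigma)-\mu\leq(2+o(1))\tfrac{\tau}{\sqrt{2\pi}}<1$ for $p$ large.

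Granting this, set $C_p:=\mu+3\tau\sqrt{(32\alpha+1)\log p}$ and, with $E_j:=X_{jp}-\Theta_j$, define
\[
B_p=\Big\{\,j:\ \sigma_{jp}\in[v_p,10v_p],\ \Theta_j>C_p,\ E_j>-\tau\sqrt{(32\alpha+1)\log p}\,\Big\}.
\]
For $j\in B_p$ we get $X_{jp}-\mu\geq\tau\sqrt{(32\alpha+1)\log p}$ and hence $\hat\mu_j<\mu+1$. Because $\Theta_j,\sigma_{jp}$ are independent and $E_j\mid\sigma_{jp}$ has variance $\sigma_{jp}^{2}$, Chebyshev gives $P\big(E_j\leq-\tau\sqrt{(32\alpha+1)\log p}\mid\sigma_{jp}\in[v_p,10v_p]\big)\to 0$, and with $P(\sigma_{jp}\in[v_p,10v_p])=\tfrac12(e^{-1}-e^{-10})$ and $P(\Theta_j>C_p)=C_p^{-4}\asymp(\log p)^{-2}$ we obtain $P(j\in B_p)\geq c(\log p)^{-2}$ for some $c>0$. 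Now for each $j\in B_p$ and each $k$ with $\sigma_{kp}=0$ and $\mu+1<\Theta_k<C_p$ we have $\hat\mu_k=\Theta_k>\mu+1>\hat\mu_j$ but $\Theta_k<C_p<\Theta_j$, so $(k,j)\in{\mathcal M}_p$; by independence across units,
\[
{\mathbb E}\big(|{\mathcal M}_p|\big)\ \geq\ \sum_{j\neq k}P(j\in B_p)\,P\big(\sigma_{kp}=0,\ \mu+1<\Theta_k<C_p\big)\ \geq\ p(p-1)\cdot c(\log p)^{-2}\cdot\tfrac14(\mu+1)^{-4},
\]
which is $\Omega\!\big(p^{2}/(\log p)^{2}\big)$. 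Hence the per‑unit misranking loss is $\geq p^{-1}{\mathbb E}(|{\mathcal M}_p|)=\Omega\!\big(p/(\log p)^{2}\big)\not\to 0$, so posterior mean ranking is inconsistent, even though value ranking is consistent here (Lemma~\ref{ValRankConsist}, once $p^{2}v_p\to 0$).

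The main obstacle is the collapse estimate of the second paragraph: Lemma~\ref{PMBound} is unavailable in the regime ($\sigma$ near $0$, $x-\mu$ only of order $\sqrt{\log p}$) that actually carries positive probability, and its bound $\hat\mu\lesssim(x+\mu)/2$ is useless once $x$ grows, so the Laplace‑type split of $\pi(\theta)f(x-\theta)$ into a prior‑bulk part and a negligible tail must be carried out directly, the delicate point being to make every $o(1)$ uniform over $\sigma\in[v_p,10v_p]$ and over all admissible $x$; the case $x\to\infty$ is harmless since $u^{4}e^{-u^{2}/8\tau^{2}}$ is decreasing for $u\geq 4\tau$, so the tail term is largest at the smallest admissible $x$.
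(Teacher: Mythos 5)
Your proof is correct in outline and reaches the same critical scale as the paper's ($x-\mu\asymp\tau\sqrt{32\alpha\log p}$ is exactly the paper's $\zeta_p$), but it takes a genuinely different route. The paper never proves the ``collapse to $\mu+O(\tau)$'' that you rely on: it uses only the weaker conclusion of Lemma~\ref{PMBound}, that the posterior mean drops below roughly $\tfrac{x+\mu}{2}$ plus a correction controlled by the defining equation for $\zeta_p$, and it makes that suffice by pairing the shrunk unit $j$ (with $\theta_j\in[\mu+\tfrac{11}{12}\zeta_p,\mu+\zeta_p]$, $\sigma_{jp}\in(v_p,2v_p)$) against $\sigma=0$ units $i$ in a second thin window $[\mu+\tfrac34\zeta_p,\mu+\tfrac56\zeta_p]$ at the \emph{same} $\sqrt{\log p}$ scale, each window carrying probability $\asymp\zeta_p^{-4}\asymp(\log p)^{-2}$. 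So your remark that the $(x+\mu)/2$ bound is ``far too weak'' is true for your pairing (against the bulk of units near $\mu$) but not for the paper's. What your stronger collapse estimate buys is a much simpler count and a stronger conclusion: misranking against a constant fraction of all units gives ${\mathbb E}|{\mathcal M}_p|=\Omega(p^2/(\log p)^2)$ rather than the paper's $\Omega(p^2/(\log p)^4)$. Your diagnosis of Lemma~\ref{PMBound} is worth a caveat: the hypothesis $x-\mu>2\sqrt[4]{27}\,\tau^2/\sigma$ is indeed incompatible with the regime $x-\mu\asymp\sqrt{\log p}$, $\sigma\asymp p^{-\alpha}$ that carries the probability mass --- and the paper itself applies the lemma in exactly that regime, so as written it violates its own hypothesis --- but an inspection of the lemma's proof shows that hypothesis is never used (only $x-\mu>2\sigma$ and $x-\mu>8\tau$ are), so the lemma is in fact available there; your self-contained Laplace-type derivation simply sidesteps the issue. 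The one step of yours that needs more care than you give it is the constant $2$ in $\int_\mu^{(x+\mu)/2}(\theta-\mu)\pi(\theta)f(x-\theta)\,d\theta\leqslant(2+o(1))\tfrac{\tau}{\sqrt{2\pi}}f(x-\mu)$: the pointwise inflation $f(x-\theta)/f(x-\mu)$ reaches $(1+o(1))\cdot 2^4=16$ near $\theta=\tfrac{x+\mu}{2}$, so you must split again and use the Gaussian weight $e^{-(\theta-\mu)^2/2\tau^2}$ to suppress the region $\theta-\mu\gtrsim(1-2^{-1/4})(x-\mu)$ where the inflation exceeds $1+\epsilon$; this works precisely because $x-\mu\to\infty$, and you correctly flag the uniformity in $\sigma$ and $x$ as the delicate point, with the monotonicity of $u^4e^{-u^2/8\tau^2}$ handling large $x$. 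With that detail filled in, your argument is complete.
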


\begin{proof}
  Recall that the ranking method with estimated ranking $\hat\rho$ is consistent with respect to total
  per-unit misranking loss if and only if
  $$p^{-1}\sum_{\theta_i<\theta_j} P(\hat{\rho}(i)>\hat{\rho}(j))\rightarrow 0$$
We therefore want to show that this sequence does not converge to 0. We
will do this by constructing a sequence $\zeta_1,\zeta_2,\ldots$ and a
constant $m$ such that

\begin{enumerate}[(i)]

\item \label{GeneralCounterExampleMisrankCond} Whenever
  $\theta_i\in\left[\mu+\frac{3}{4}\zeta_p,\mu+\frac{5}{6}\zeta_p\right]$
  and $\theta_j\in\left[\mu+\frac{11}{12}\zeta_p,\mu+\zeta_p\right]$,
  the probability that $\hat{\rho}(i)<\hat{\rho}(j)$ is bounded below by $m$.

\item \label{GeneralCounterExampleProbCond}
  $P\left(\theta_i\in\left[\mu+\frac{3}{4}\zeta_p,\mu+\frac{5}{6}\zeta_p\right]\right)$
  and
  $P\left(\theta_i\in\left[\mu+\frac{11}{12}\zeta_p,\mu+\zeta_p\right]\right)$
  are both larger than $p^{-\frac{1}{2}}$
  for all sufficiently large $p$.

\end{enumerate}

\noindent These conditions will give us that
 \begin{align*}
p^{-1}\sum_{\theta_i<\theta_j} P(\hat{\rho}(i)>\hat{\rho}(j))&\geqslant p^{-1}\sum_{\stackrel{\mu+\frac{3}{4}\zeta_p<\theta_i<\mu+\frac{5}{6}\zeta_p}{\mu+\frac{11}{12}\zeta_p<\theta_j<\mu+\zeta_p}} P(\hat{\rho}(i)>\hat{\rho}(j))\\
 &\geqslant p^{-1}\left( p^2P\left(\mu+\frac{3}{4}\zeta_p<\Theta_i<\mu+\frac{5}{6}\zeta_p\right)P\left(\mu+\frac{11}{12}\zeta_p<\Theta_j<\mu+\zeta_p\right)m \right)\\
&\geqslant m
\end{align*}
which will prove the inconsistency.

We will show that setting $\zeta_p$ to be the largest
solution to
$${\zeta_p}^4e^{-\frac{{\zeta_p}^2}{8\tau^2}}=\frac{8192}{2023}e^{-2}{s_{p}}^4\tau^{-2}$$
satisfies the required conditions. Clearly $\zeta_p\rightarrow\infty$, so
for large enough $p$, we have $\frac{\mu}{\zeta_p}<\frac{1}{12}$. The
probabilities in Condition~\eqref{GeneralCounterExampleProbCond} are
calculated from the Pareto distribution:
$$P\left(\mu+\frac{3}{4}\zeta_p<\Theta_i<\mu+\frac{5}{6}\zeta_p\right)=\left(\mu+\frac{3}{4}\zeta_p\right)^{-4}-\left(\mu+\frac{5}{6}\zeta_p\right)^{-4}>\left(\left(\frac{3}{4}+\frac{1}{12}\right)^{-4}-\left(\frac{5}{6}+\frac{1}{12}\right)^{-4}\right){\zeta_p}^{-4}$$
and similarly
$$P\left(\mu+\frac{11}{12}\zeta_p<\Theta_i<\mu+\zeta_p\right)>\left(\left(\frac{11}{12}+\frac{1}{12}\right)^{-4}-\left(1+\frac{1}{12}\right)^{-4}\right){\zeta_p}^{-4}$$
which means that Condition~\eqref{GeneralCounterExampleProbCond} will
hold whenever ${\zeta_p}^{-4}>p^{-\frac{1}{2}}$, or equivalently
$\zeta_p<p^{\frac{1}{8}}$. Since
${\zeta_p}^4e^{-\frac{{\zeta_p}^2}{8\tau^2}}\rightarrow 0$ as
$p\rightarrow\infty$, to show that $p^{\frac{1}{8}}$ is larger than
the largest solution to 
$${\zeta_p}^4e^{-\frac{{\zeta_p}^2}{8\tau^2}}=\frac{8192}{2023}e^{-2}{s_{p}}^4\tau^{-2}$$
it is sufficient to prove that
$$
\left(p^{\frac{1}{8}}\right)^4e^{-\frac{\left(p^{\frac{1}{8}}\right)^2}{8\tau^2}}\leqslant\frac{8192}{2023}e^{-2}{s_{p}}^{4}\tau^{-2}$$
We have that
\begin{align*}
  \left(p^{\frac{1}{8}}\right)^4e^{-\frac{\left(p^{\frac{1}{8}}\right)^2}{8\tau^2}}{s_{p}}^{-4}&=p^{\frac{1}{2}}{s_{p}}^{-4}e^{-\frac{p^{\frac{1}{4}}}{8\tau^2}}\\
  &\leqslant Cp^{\frac{1}{2}}p^{4\alpha}e^{-\frac{p^{\frac{1}{4}}}{8\tau^2}}\\
  &\rightarrow 0
\end{align*}
Therefore, the condition must hold for sufficiently large $p$.

For Condition~\eqref{GeneralCounterExampleMisrankCond}, let
$\mu+\frac{3}{4}\zeta_p<\theta_i<\mu+\frac{5}{6}\zeta_p$ and
$\mu+\frac{11}{12}\zeta_p<\theta_j<\mu+\zeta_p$.  There is probability
at least $\frac{m}{2}\times\frac{m}{2}\times
\left(e^{-1}-e^{-2}\right)$ that $\sigma_{ip}=0$,
$x_i>\mu+\frac{3}{4}\zeta_p$, $x_j<\mu+\zeta_p$ and
$v_p<\sigma_{jp}<2v_p$.  We want to use Lemma~\ref{PMBound} to show
that under these conditions, the units $u_i$ and $u_j$ must be
misranked. It is easy to see that the condition
$\hat{\rho}(i)<\hat{\rho}(j)$, will be retained by increasing $x_i$ or
decreasing $x_j$, since the posterior mean is clearly an increasing
function of $x_i$ and $x_j$. It is therefore sufficient to show that
the units are misranked when $x_i=\mu+\frac{3}{4}\zeta_p$ and
$x_j=\mu+\zeta_p+2\sigma_{jp}$. By Lemma~\ref{PMBound}, it is
sufficient to show that
$$\frac{x_j+\mu}{2}+\frac{289}{4096}e^{2}\tau^2{\sigma_{jp}}^{-4}(x_j-\mu)^{5}e^{-\frac{(x_j-\mu)^2}{8\tau^2}}<x_i$$
Making the above substitutions, we get 
\begin{align*}
\mu+\frac{\zeta_p}{2}+\frac{289}{4096}e^{2}\tau^2{\sigma_{jp}}^{-4}(\zeta_p)^{5}e^{-\frac{(\zeta_p)^2}{8\tau^2}}&\leqslant
\mu+(\zeta_p)\left(\frac{1}{2}+\frac{289}{4096}e^{2}\tau^2{\sigma_{jp}}^{-4}\left(\frac{8192}{2023}e^{-2}{s_{p}}^4\tau^{-2}\right)\right)\\
&\leqslant
\mu+\zeta_p\left(\frac{1}{2}+\frac{2}{7}{s_{p}}^4{\sigma_{jp}}^{-4}\right)\\
&\leqslant \mu+\frac{9}{14}\zeta_p\\
&\leqslant x_i
\end{align*}
so the units are misranked under these conditions.
\end{proof}

\subsection{Normal Error}

\begin{lemma}
Suppose that $X_{ip}$ is normally distributed with mean $\Theta_i$ and
variance ${\sigma_{ip}}^2<1$. Suppose that we estimate ranking using a
prior distribution for $\Theta$ with density $\pi(x)=e^{-e^{\frac{x^2}{4}}}$. For
any $x$ satisfying $x^2\geqslant 4\log 3-8\log(\sigma)$, and
$2\sqrt{\pi}x^2\sigma^2<1$, the posterior mean $\mu$ satisfies
$\mu<x-\frac{1}{x}$.
\end{lemma}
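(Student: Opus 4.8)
The plan is to prove the equivalent statement $\int_{-\infty}^{\infty}\bigl(\theta-x+\tfrac1x\bigr)\pi(\theta)L(x;\theta,\sigma)\,d\theta<0$, where $\pi(\theta)=e^{-e^{\theta^2/4}}$ and $L(x;\theta,\sigma)=\tfrac1{\sqrt{2\pi}\,\sigma}e^{-(x-\theta)^2/(2\sigma^2)}$; since $\pi$ decays faster than any Gaussian the posterior has a finite mean, and this display is equivalent to $\mu<x-\tfrac1x$. Write $c=x-\tfrac1x$, $A=e^{x^2/4}$ and $B=e^{c^2/4}=Ae^{-1/2}e^{1/(4x^2)}$, and split the integral at $\theta=c$: the integrand is negative on $(-\infty,c)$ and positive on $(c,\infty)$, so it suffices to show the negative part outweighs the positive part. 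I would first record a consequence of the two hypotheses used repeatedly below: the condition $x^2\ge 4\log 3-8\log\sigma$ is exactly $A\ge 3\sigma^{-2}$, which combined with $2\sqrt\pi\,x^2\sigma^2<1$ gives $A>6\sqrt\pi\,x^2$, forcing $x^2$ to exceed roughly $22$ and $A$ to be large (at least about $245$).

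For the positive part I would use that $\pi$ is decreasing on $(0,\infty)$ and that $t\mapsto t^2/4$ is convex, so that for $\theta\ge c>0$ one has $e^{\theta^2/4}\ge Be^{c(\theta-c)/2}\ge B\bigl(1+\tfrac c2(\theta-c)\bigr)$ and therefore $\pi(\theta)\le e^{-B}e^{-Bc(\theta-c)/2}$; combining this with $L(x;\theta,\sigma)\le\tfrac1{\sqrt{2\pi}\,\sigma}$ and evaluating $\int_0^\infty u\,e^{-Bcu/2}\,du$ gives
$$\int_{c}^{\infty}\bigl(\theta-c\bigr)\pi(\theta)L(x;\theta,\sigma)\,d\theta\ \le\ \frac{4\,e^{-B}}{\sqrt{2\pi}\,\sigma\,B^2c^2}.$$
For the negative part I would restrict the integral to the short symmetric window $J=[\theta_0-\delta,\theta_0+\delta]$ centred at $\theta_0=x-\tfrac2x$ with half-width $\delta=\tfrac1{x\sqrt A}$; since $A$ is large one has $0<\theta_0-\delta$ and $\theta_0+\delta<c$, so $J\subset(0,c)$. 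On $J$ the exponent $\psi(\theta)=e^{\theta^2/4}+\tfrac{(x-\theta)^2}{2\sigma^2}$ is controlled term by term: the first summand is at most $(1+\epsilon_0)B'$ with $B'=e^{\theta_0^2/4}=Ae^{-1}e^{1/x^2}$ and $\epsilon_0$ a small absolute constant, because the increment of $\theta^2/4$ over $\theta_0^2/4$ across $J$ is at most about $\tfrac1{2\sqrt A}$; the second summand is at most $\tfrac9{2x^2\sigma^2}\le\tfrac{3A}{2x^2}$ since $(x-\theta)^2\le 9/x^2$ on $J$ and $\sigma^{-2}\le A/3$. Also $c-\theta\ge\tfrac1x\bigl(1-A^{-1/2}\bigr)$ on $J$, so bounding the integrand below by its minimum times $|J|=2\delta$,
$$\int_{-\infty}^{c}\bigl(c-\theta\bigr)\pi(\theta)L(x;\theta,\sigma)\,d\theta\ \ge\ \frac{2\bigl(1-A^{-1/2}\bigr)}{\sqrt{2\pi}\,\sigma\,x^2\sqrt A}\ e^{-(1+\epsilon_0)B'-3A/(2x^2)}.$$

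Comparing the two displays, the claim reduces to $e^{\,B-(1+\epsilon_0)B'-3A/(2x^2)}>\dfrac{2x^2\sqrt A}{(1-A^{-1/2})B^2c^2}$. The right-hand side is of order $A^{-3/2}$, hence tiny; on the left, $B-(1+\epsilon_0)B'=B'\bigl(B/B'-1-\epsilon_0\bigr)$ with $B/B'=e^{1/2}e^{-3/(4x^2)}$, which exceeds, say, $\tfrac32$ once $x^2$ is above a small absolute bound, so $B-(1+\epsilon_0)B'\ge\kappa A$ for a positive constant $\kappa$; since $x^2$ is large, $\tfrac{3A}{2x^2}\le\tfrac\kappa2 A$, leaving an exponent at least $\tfrac\kappa2 A$, and $e^{\kappa A/2}$ dwarfs $A^{-3/2}$, so the negative part dominates. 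The main obstacle is precisely this comparison: the window radius $\delta$ must be small enough that $e^{\theta^2/4}$ stays within a constant factor of $B'=e^{\theta_0^2/4}$ on $J$ yet not so small that the factor $2\delta$ in the lower bound is too costly, and one must check $B>(1+\epsilon_0)B'+\tfrac{3A}{2x^2}$ uniformly over the admissible pairs $(x,\sigma)$. This is where both hypotheses genuinely enter — the lower bound on $x^2$ (equivalently $\sigma^{-2}\le A/3$) forces $A$ large, which is what makes $B/B'$ safely above $1$ and $\tfrac{3A}{2x^2}$ negligible against $\kappa A$, while the upper bound $2\sqrt\pi x^2\sigma^2<1$ keeps $\sigma^{-2}$ comparable to $A$ so that the second summand of $\psi$ on $J$ cannot swamp the first.
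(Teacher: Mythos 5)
Your proof is correct and follows essentially the same strategy as the paper's: reduce the claim to showing $\int(\theta-(x-\tfrac1x))\pi(\theta)e^{-(\theta-x)^2/2\sigma^2}\,d\theta<0$, lower-bound the negative contribution on a short interval near $x-\tfrac2x$ where the likelihood exponent is at most $\tfrac{9}{2x^2\sigma^2}$, upper-bound the positive tail using the decay of $\pi$ beyond $x-\tfrac1x$, and win the comparison because $e^{(x-1/x)^2/4}-e^{(x-2/x)^2/4}$ is of order $e^{x^2/4}\geqslant 3\sigma^{-2}$. Your only departures are cosmetic --- a narrower symmetric window of half-width $\tfrac1{x\sqrt A}$ in place of the paper's $[x-\tfrac3x,x-\tfrac2x]$, and a sharper (but unneeded) exponential tail bound for the positive part --- so the two arguments are essentially identical.
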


\begin{proof}
Since $\log(\sigma)<0$, we have $x^2\geqslant 4\log
3-8\log(\sigma)>4$, which means that $x>2$. The posterior density of
$\Theta|X$ is given by $$f_{\Theta|X}(\theta)\propto
e^{-e^{\frac{\theta^2}{4}}}e^{-\frac{(\theta-x)^2}{2\sigma^2}}$$ The
posterior mean $\mu$ is the solution to
\begin{align*}
%  \mu&=\frac{\int_{-\infty}^{\infty}\theta
%    e^{-e^{\frac{\theta^2}{4}}}e^{-\frac{(\theta-x)^2}{2\sigma^2}}\,d\theta}{\int_{-\infty}^{\infty} e^{-e^{\frac{\theta^2}{4}}}e^{-\frac{(\theta-x)^2}{2\sigma^2}}\,d\theta}\\
  \int_{-\infty}^{\infty}(\theta-\mu)e^{-e^{\frac{\theta^2}{4}}}e^{-\frac{(\theta-x)^2}{2\sigma^2}}\,d\theta&=0\\
\end{align*}
Thus, it is sufficient to prove that 
$$\int_{-\infty}^{\infty}\left(\theta-\left(x-\frac{1}{x}\right)\right)e^{-e^{\frac{\theta^2}{4}}}e^{-\frac{(\theta-x)^2}{2\sigma^2}}\,d\theta\leqslant0$$
By splitting this integral into the positive and negative part, we get
\begin{align*}
  \int_{-\infty}^{\infty}\left(\theta-\left(x-\frac{1}{x}\right)\right)e^{-e^{\frac{\theta^2}{4}}}e^{-\frac{(\theta-x)^2}{2\sigma^2}}\,d\theta&\leqslant
  -\int_{x-\frac{3}{x}}^{x-\frac{2}{x}}\frac{1}{x}e^{-e^{\frac{\left(x-\frac{2}{x}\right)^2}{4}}}e^{-\frac{(\theta-x)^2}{2\sigma^2}}\,d\theta+\int_{x-\frac{1}{x}}^\infty\left(\theta-\left(x-\frac{1}{x}\right)\right)e^{-e^{\frac{\theta^2}{4}}}e^{-\frac{(\theta-x)^2}{2\sigma^2}}\,d\theta\\
  &\leqslant
  -\frac{e^{-e^{\frac{\left(x-\frac{2}{x}\right)^2}{4}}}}{x}\int_{x-\frac{3}{x}}^{x-\frac{2}{x}}e^{-\frac{(\theta-x)^2}{2\sigma^2}}\,d\theta+e^{-e^{\frac{\left(x-\frac{1}{x}\right)^2}{4}}}\int_{x-\frac{1}{x}}^\infty\left(\theta-x+\frac{1}{x}\right)e^{-\frac{(\theta-x)^2}{2\sigma^2}}\,d\theta\\
  &\leqslant
  -\frac{e^{-e^{\frac{\left(x-\frac{2}{x}\right)^2}{4}}}}{x^2}e^{-\frac{9}{2x^2\sigma^2}}+e^{-e^{\frac{\left(x-\frac{1}{x}\right)^2}{4}}}\left(\sqrt{2\pi}\sigma{\mathbb
    E}_{\Theta\sim N(x,\sigma^2)}\left((\Theta-x)_++\frac{1}{x}\right)\right)\\
  &=
  -\frac{e^{-e^{\frac{\left(x-\frac{2}{x}\right)^2}{4}}}}{x^2}e^{-\frac{9}{2x^2\sigma^2}}+\sqrt{2\pi}\left(\sqrt{2}\sigma^2+\frac{\sigma}{x}\right)e^{-e^{\frac{\left(x-\frac{1}{x}\right)^2}{4}}}\\
\end{align*}

Since $x>2$, we have $$e^{\frac{1}{4x^2}-\frac{1}{2}}\left(1-e^{-\frac{3}{2}+\frac{3}{4x^2}}\right)\geqslant
e^{-\frac{1}{2}}\left(1-e^{-1}\right)\geqslant \frac{3}{8}$$
and since $2\sqrt{\pi}x^2\sigma^2<1$, we have
$\log(2\sqrt{\pi})+2\log(x\sigma)<0$ and $x^2\geqslant
4\log(3)-8\log(\sigma)$. Thus, we get
\begin{align*}
  e^{\frac{x^2}{4}}&\geqslant \frac{3}{\sigma^2}\\
  e^{\frac{1}{4x^2}-\frac{1}{2}}\left(1-e^{-\frac{3}{2}+\frac{3}{4x^2}}\right)e^{\frac{x^2}{4}}&\geqslant
  \frac{3}{8}e^{\frac{x^2}{4}}\geqslant \frac{9}{8\sigma^2}\geqslant \frac{9}{2x^2\sigma^2}+\log(2\sqrt{\pi})+2\log(x\sigma)\\
%  e^{\frac{1}{4}\left(x^2-2+\frac{1}{x^2}\right)}-e^{\frac{1}{4}\left(x^2-8+\frac{4}{x^2}\right)}-\frac{9}{2x^2\sigma^2}&\geqslant\log( 2\sqrt{\pi})+2\log(x\sigma)\\
  e^{e^{\frac{1}{4}\left(x-\frac{1}{x}\right)^2}-e^{\frac{1}{4}\left(x-\frac{2}{x}\right)^2}-\frac{9}{2x^2\sigma^2}}&\geqslant
2\sqrt{\pi}x^2\sigma^2\\
%  \frac{e^{-e^{\frac{\left(x-\frac{2}{x}\right)^2}{4}}}}{x^2}e^{-\frac{9}{2x^2\sigma^2}}&\geqslant
%2\sqrt{\pi}\sigma^2e^{-e^{\frac{\left(x-\frac{1}{x}\right)^2}{4}}}\\
-\frac{e^{-e^{\frac{\left(x-\frac{2}{x}\right)^2}{4}}}}{x^2}e^{-\frac{9}{2x^2\sigma^2}}+2\sqrt{\pi}\sigma^2e^{-e^{\frac{\left(x-\frac{1}{x}\right)^2}{4}}}&\leqslant
0
\end{align*}

%  
%Thus, to show that this is less than or equal to 0, we need to show
%that
%\begin{align*}
%-\frac{e^{-e^{\frac{\left(x-\frac{2}{x}\right)^2}{4}}}}{x^2}e^{-\frac{9}{2x^2\sigma^2}}+2\sqrt{\pi}\sigma^2e^{-e^{\frac{\left(x-\frac{1}{x}\right)^2}{4}}}&\leqslant
%0\\
%\frac{e^{-e^{\frac{\left(x-\frac{2}{x}\right)^2}{4}}}}{x^2}e^{-\frac{9}{2x^2\sigma^2}}&\geqslant
%2\sqrt{\pi}\sigma^2e^{-e^{\frac{\left(x-\frac{1}{x}\right)^2}{4}}}\\
%e^{e^{\frac{1}{4}\left(x-\frac{1}{x}\right)^2}-e^{\frac{1}{4}\left(x-\frac{2}{x}\right)^2}-\frac{9}{2x^2\sigma^2}}&\geqslant
%2\sqrt{\pi}x^2\sigma^2\\
%e^{\frac{1}{4}\left(x^2-2+\frac{1}{x^2}\right)}-e^{\frac{1}{4}\left(x^2-8+\frac{4}{x^2}\right)}-\frac{9}{2x^2\sigma^2}&\geqslant\log(
%2\sqrt{\pi})+2\log(x)+2\log(\sigma)\\
%e^{\frac{1}{4}\left(x^2-2+\frac{1}{x^2}\right)}\left(1-e^{-\frac{3}{2}+\frac{3}{4x^2}}\right)-\frac{9}{2x^2\sigma^2}&\geqslant\log(
%2\sqrt{\pi})+2\log(x)+2\log(\sigma)\\
%e^{\frac{1}{4x^2}-\frac{1}{2}}\left(1-e^{-\frac{3}{2}+\frac{3}{4x^2}}\right)e^{\frac{x^2}{4}}-\frac{9}{2x^2\sigma^2}&\geqslant\log(
%2\sqrt{\pi})+2\log(x)+2\log(\sigma)\\
%\end{align*}
%
%
%so we only need to show that 
%
%\begin{align*}
%\frac{3}{8}e^{\frac{x^2}{4}}&\geqslant\frac{9}{2x^2\sigma^2}+\log(
%2\sqrt{\pi})+2\log(x\sigma)
%\end{align*}
%For $x\sigma<\frac{1}{\sqrt{2\sqrt{\pi}}}$, we have that $\log(
%2\sqrt{\pi})+2\log(x\sigma)<0$, and $x>2$ so we just need to show 
%\begin{align*}
%\frac{3}{8}e^{\frac{x^2}{4}}&\geqslant\frac{9}{8\sigma^2}\\
%e^{\frac{x^2}{4}}&\geqslant\frac{3}{\sigma^2}\\
%\frac{x^2}{4}&\geqslant\log(3)-2\log(\sigma)\\
%\end{align*}
\end{proof}

Thus, any pair of units $u_i$, $u_j$ satisfying the following
constraints will be misranked by posterior mean:

\begin{itemize}

\item $\sigma_{ip}=0$, $v_p<\sigma_{jp}<1$.

\item $\theta_i<\theta_j$ and $x_i>x_j-\frac{1}{x_j}$.

%\item $x_j>2$

\item ${x_j}^2\geqslant 4\log 3-8\log(\sigma_{jp})$
  and $2\sqrt{\pi}{x_j}^2{\sigma_{jp}}^2<1$.
  
\end{itemize}

\begin{theorem}
  Let $\Theta_i$ be independently drawn from a true prior with density
  $f(\theta)=\frac{|\Theta|e^{-|\theta|}}{2}$, and $X_{ip}$ be
  independantly normally distributed with mean $\Theta_i$ and variance
  ${\sigma_{ip}}^2$, where ${\sigma_{ip}}^2$ are i.i.d.  with
  probability mass $\frac{1}{2}$ at 0 and non-zero values following an
  exponential distribution with mean $v_p$. Suppose $v_p$ satisfies
  $2{\sqrt{-8\log(v_p)}}=\log(p)$. Then posterior mean ranking of
  units is inconsistent with respect to total misranking loss.
\end{theorem}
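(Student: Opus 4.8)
Here is the strategy I would follow.

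The plan is to reprise the argument of the General Error theorem, with the lemma on the prior $\pi(x)=e^{-e^{x^2/4}}$ immediately above (together with the list of sufficient conditions for posterior‑mean misranking displayed after its proof) playing the role Lemma~\ref{PMBound} played there. By Lemma~\ref{ConsistencyConditions}, to establish inconsistency with respect to total misranking loss ($s(p)=1$) it suffices to show $\mathbb E(|\mathcal M_p|)\not\to 0$, where $\mathcal M_p=\{(i,j):\Theta_i<\Theta_j,\ \hat\rho_p(i)<\hat\rho_p(j)\}$; in fact I will show $\mathbb E(|\mathcal M_p|)\to\infty$. Concretely, I want two short adjacent intervals $I_p<J_p$ sitting just below a slowly growing threshold $\xi_p$, with: (a) for every pair $(i,j)$ with $\Theta_i\in I_p$, $\Theta_j\in J_p$, the conditional probability (given these two values) that $(i,j)\in\mathcal M_p$ is at least an absolute constant $m>0$; and (b) $P(\Theta\in I_p)$ and $P(\Theta\in J_p)$ each at least a constant multiple of $p^{-1/2}$. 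Since the $\Theta_i$ are i.i.d.\ and the $\sigma$'s and normal errors are independent across units, conditioning on $(\Theta_1,\dots,\Theta_p)$ and summing over the ordered pairs with $\Theta_i\in I_p,\Theta_j\in J_p$ then gives $\mathbb E(|\mathcal M_p|)\ge m\,p(p-1)\,P(\Theta\in I_p)\,P(\Theta\in J_p)$, which is $\gtrsim p$.

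For the threshold I would take $\xi_p=\sqrt{4\log 3-8\log v_p}$. The hypothesis $2\sqrt{-8\log v_p}=\log p$ forces $-8\log v_p=\tfrac14(\log p)^2$, so $\xi_p^2=4\log 3+\tfrac14(\log p)^2$; hence $\xi_p\to\infty$ and, by $\sqrt{a+b}\le\sqrt a+\sqrt b$, $e^{-\xi_p}\ge e^{-2\sqrt{\log 3}}\,p^{-1/2}$. Put $I_p=[\xi_p,\xi_p+c_0/\xi_p]$ and $J_p=[\xi_p+c_0/\xi_p,\xi_p+2c_0/\xi_p]$ for a small absolute constant $c_0<\tfrac18$ to be used in (a). These intervals lie beyond $1$, where the true density $\tfrac12\theta e^{-\theta}$ is decreasing, so $P(\Theta\in I_p)\ge\tfrac{c_0}{\xi_p}\cdot\tfrac12\xi_p e^{-\xi_p-c_0}$, which is at least an absolute constant times $p^{-1/2}$, and the same for $J_p$; this is (b). (Note $p^2v_p\to 0$, so value ranking is consistent for this problem by Lemma~\ref{ValRankConsist}, which is what makes the example non‑trivial.)

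For (a), fix $\theta_i\in I_p$, $\theta_j\in J_p$ and work on the event that $\sigma_{ip}=0$, that $\sigma_{jp}$ is nonzero with $v_p<\sigma_{jp}$ and $\sigma_{jp}^2<2v_p$, and that $|X_{jp}-\theta_j|<\sigma_{jp}$. By independence of $\sigma_{ip}$, $\sigma_{jp}$ and the standardised error $(X_{jp}-\Theta_j)/\sigma_{jp}$, and the exponential law of the nonzero $\sigma_{jp}$, this event has probability at least an absolute constant $m>0$, uniformly in $p,\theta_i,\theta_j$. On this event I verify the four conditions of the preceding lemma for the pair, with $j$ the ``noisy'' unit: $v_p<\sigma_{jp}<1$ holds for $p$ large; writing $x_j=X_{jp}$, from $|x_j-\theta_j|<\sigma_{jp}<\sqrt{2v_p}$ and $\theta_j\in J_p$ one gets $\xi_p\le x_j\le 2\xi_p$ for $p$ large, hence $x_j^2\ge\xi_p^2=4\log 3-8\log v_p>4\log 3-8\log\sigma_{jp}$ (as $\sigma_{jp}>v_p$) and $2\sqrt\pi\,x_j^2\sigma_{jp}^2\le 16\sqrt\pi\,\xi_p^2 v_p\to 0<1$ (since $v_p$ decays super‑polynomially while $\xi_p^2=O((\log p)^2)$); and, because $\sigma_{ip}=0$ gives $x_i=\theta_i$, the last clause $x_i>x_j-1/x_j$ is equivalent to $\theta_j-\theta_i<1/x_j-(x_j-\theta_j)$, which holds because $\theta_j-\theta_i\le 2c_0/\xi_p$, $x_j-\theta_j<\sqrt{2v_p}$, and $1/x_j\ge 1/(2\xi_p)$, so the inequality is strict for $c_0<\tfrac18$ and $p$ large. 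The preceding lemma then yields that $u_i$ and $u_j$ are misranked by posterior mean, i.e.\ $(i,j)\in\mathcal M_p$. Combining (a) and (b), $\mathbb E(|\mathcal M_p|)\ge m\,p(p-1)\,P(\Theta\in I_p)P(\Theta\in J_p)\to\infty$, so the first condition of Lemma~\ref{ConsistencyConditions} fails and posterior mean ranking is inconsistent with respect to total misranking loss.

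The main obstacle is the calibration in step (a): one must choose $\xi_p$ and the width $c_0/\xi_p$ so that \emph{all} clauses of the preceding lemma hold simultaneously on a single event of probability bounded away from $0$, and in particular the delicate clause $x_i>x_j-1/x_j$, which is exactly what pins $J_p$ to within $O(1/\xi_p)$ of $I_p$ and therefore governs the $\Theta$‑probabilities appearing in (b). Everything else — the uniform lower bound $m$ from normality and the exponential law of $\sigma_{jp}$, and the estimate $P(\Theta\in I_p)\gtrsim p^{-1/2}$ coming from the $\tfrac12\theta e^{-\theta}$ tail together with the hypothesis relating $v_p$ and $p$ — is routine bookkeeping, mirroring the role of Lemma~\ref{PMBound} and the Pareto tail in the General Error example.
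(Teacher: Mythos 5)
Your proposal is correct and follows essentially the same route as the paper: both rest on the preceding lemma's bound $\mu_j<x_j-\frac{1}{x_j}$, the event $\{\sigma_{ip}=0,\ v_p<\sigma_{jp},\ \sigma_{jp}^2<2v_p,\ |X_{jp}-\Theta_j|<\sigma_{jp}\}$ forcing a pairwise misrank, and a count of misranked pairs via the relation $2\sqrt{-8\log(v_p)}=\log(p)$. The only real difference is bookkeeping: you localise $(\Theta_i,\Theta_j)$ to two thin adjacent intervals at the single scale $\xi_p\approx\frac{1}{2}\log p$, each carrying prior mass of order $p^{-1/2}$, giving ${\mathbb E}|{\mathcal M}_p|\gtrsim p$, whereas the paper integrates over the full admissible range of $\theta_j$ and obtains the marginally stronger ${\mathbb E}|{\mathcal M}_p|\gtrsim p\log p$ --- both suffice for inconsistency.
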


\begin{proof}
  It is sufficient to show that $p^2P(\Theta_i>\Theta_j\textrm{ and }\hat{\rho}(i)<\hat{\rho}(j))\not\rightarrow 0$, i.e. the expected number of
  misranked pairs does not converge to 0. We have found sufficient
  conditions to ensure that a pair are misranked. We know that
  $P(\sigma_{ip}=0\textrm{ and }
  v_p<\sigma_{jp}<2v_p)=\frac{e^{-1}-e^{-2}}{4}$.
Thus we only need to ensure
that $$p^2P\left(\left({x_j}^2\geqslant 4\log
3-8\log(\sigma_{jp})\right)\textrm{ and }\left(x_j-\frac{1}{x_j}<x_i<x_j\right)\textrm{ and }\left({x_j}^2<\frac{1}{2\sqrt{\pi}{\sigma_{j}}^2}\right)\right)\not\rightarrow 0$$ 
Since $\sigma_{jp}<2v_p$, we have
$P\left(|x_{jp}-\theta_j|<4v_p\right)>\frac{3}{4}$ by Chebyshev's
inequality. Thus, if
$$\theta_{j}+4v_p-\frac{1}{\theta_{j}+4v_p}<\theta_{i}<\theta_{j}-4v_p$$ the condition
$x_{jp}-\frac{1}{x_{jp}}<x_{ip}<x_{jp}$ will hold with probability
at least $\frac{3}{4}$. For any $\theta_j$ satisfying $\theta_j+\frac{1}{\theta_j+4v_p}-4v_p>1$, we have
$$P\left(\theta_{j}+4v_p-\frac{1}{\theta_{j}+4v_p}<\Theta_{i}<\theta_{j}-4v_p\right)=\int_{\theta_{j}+4v_p-\frac{1}{\theta_{j}+4v_p}}^{\theta_{j}-4v_p}|\theta|e^{-|\theta|}\,d\theta\geqslant \left(\frac{1}{\theta_j+4v_p}-8v_p\right)\theta_je^{-\theta_j}$$
where the inequality is because $\theta e^{-\theta}$ is a decreasing
function of $\theta$ for $\theta>1$. Therefore, 
\begin{align*}
  P\left(\left(\Theta_{j}+4v_p-\frac{1}{\Theta_{j}+4v_p}<\Theta_{i}<\Theta_{j}-4v_p\right)\vphantom{\frac{1}{\sqrt{2\sqrt{\pi}}v_p}}\right.&\left.\land\left(\frac{1}{\sqrt{2\sqrt{\pi}}v_p}>\Theta_j>\sqrt{-8\log(v_p)}\right)\right)\\
  &\geqslant\int_{\sqrt{-8\log(v_p)}}^{\frac{1}{\sqrt{2\sqrt{\pi}}v_p}}
\theta^2e^{-2\theta}\left(\frac{1}{\theta+4v_p}-8v_p\right)_+\,d\theta\\
&\geqslant \left(\frac{1}{2}-32v_p\right) \int_{\sqrt{-8\log(v_p)}}^{\frac{1}{16v_p}-4v_p}
\theta e^{-2\theta}\,d\theta\\
&=\left(\frac{1}{2}-32v_p\right)\left(\left[-\theta \frac{e^{-2\theta}}{2}\right]_{\sqrt{-8\log(v_p)}}^{\frac{1}{16v_p}-4v_p}+\int_{\sqrt{-8\log(v_p)}}^{\frac{1}{16v_p}-4v_p}
\frac{e^{-2\theta}}{2}\,d\theta\right)\\
&=\left(\sqrt{-8\log(v_p)}+\frac{1}{2}\right)e^{-2{\sqrt{-8\log(v_p)}}}-\left(\frac{1}{16v_p}-4v_p+\frac{1}{2}\right)e^{-\frac{1}{8v_p}+8v_p}\\
&=\sqrt{-8\log(v_p)}e^{-2{\sqrt{-8\log(v_p)}}}+o\left(\sqrt{-8\log(v_p)}e^{-2{\sqrt{-8\log(v_p)}}}\right)\\
&=\frac{\log(p)}{2}e^{-\log(p)}+o\left(\frac{\log(p)}{2}e^{-\log(p)}\right)\\
&=O\left(\frac{\log(p)}{p}\right)
\end{align*}
Thus 
$p^2P(\Theta_i>\Theta_j\textrm{ and
}\hat{\rho}(i)<\hat{\rho}(j))\geqslant O(p\log(p))$ so
posterior mean ranking is inconsistent.
\end{proof}

\section{Conclusions}\label{Conclusions}

We have shown that a large class of ranking methods are consistent
provided $p^2s(p){\mathbb
  E}\left({\sigma_{ip}}\right)^{\frac{1}{3}}\rightarrow 0$.
In cases where $\sigma_{ip}$ is the standard error of an estimator
from a sample of size $n$, we will usually have
$\sigma_{ip}\propto\frac{1}{n}$, meaning that ranking methods are
total loss convergent provided $p^2n^{-\frac{1}{3}}\rightarrow
0$. This condition for consistency is stricter than for the
consistency of continuous estimators, but it does apply in
misspecified cases, and the number of pairwise rankings to be
considered inceases in proportion to $p^2$, so consistency of ranking
methods is a stricter requirement than consistency of parameter estimators.

The key condition for consistency of ranking methods is that the prior
distribution be at least as heavy-tailed as the error
distribution. This requirement typically holds when we use a conjugate
prior, which is common practice. We have provided examples where
light-tailed priors lead to inconsistent ranking estimators. Our
results are based on the assumption that both the prior and the loss
function could be misspecified.

While we have shown that ranking methods are all consistent, even in
the event that they are misspecified, there is a lot more work that
could be done in terms of studying the asymptotic and finite sample
behaviour of various ranking methods. Research (Kenney, He and Gu,
2016) suggests that methods with heavy-tailed prior distributions are
more robust to model misspecification. This may manifest itself in the
form of faster convergence guarantees for these methods, or better
finite sample performance. These issues will be studied in further
papers.

\end{document}